\newcommand{\es}{\mathbb{B}(E^{*})}
\newcommand{\wx}{\widetilde{X}}
\newcommand{\Tb}{T_0}
\newcommand{\Sf}{S_{fx}}
\newcommand{\lx}{\operatorname{Lip}(X)}
\newcommand{\lxe}{\operatorname{Lip}(X, E)}
\newcommand{\Sig}{\Sigma^{n}_{i=1}}
\newcommand{\lxo}{\operatorname{Lip}(X_1, E_1)}
\newcommand{\lxt}{\operatorname{Lip}(X_2, E_2)}
\newcommand{\ma}{\mathcal{A}}
\newcommand{\hm}{H(\mathcal{A})}
\newcommand{\hmo}{H(\mathcal{A}_1)}
\newcommand{\hmt}{H(\mathcal{A}_2)}
\newcommand{\lxm}{\Lip(X,\mathcal{A})}
\newcommand{\lxom}{\Lip(X_1,\mathcal{A}_1)}
\newcommand{\lxtm}{\Lip(X_2,\mathcal{A}_2)}
\newcommand{\Lip}{\operatorname{Lip}}
\newcommand{\et}{e_{\theta}}
\newtheorem{theorem}{Theorem}[section]
\newtheorem{lemma}[theorem]{Lemma}
\newtheorem{prop}[theorem]{Proposition}
\theoremstyle{definition}
\newtheorem{definition}[theorem]{Definition}
\theoremstyle{remark}
\begin{document}

\author{
Shiho~Oi
}
\address{
Department of Mathematics, Faculty of Science, Niigata University, Niigata
950-2181 Japan.
}
\email{shiho-oi@math.sc.niigata-u.ac.jp
}

\title[]
{Isometries and hermitian operators on spaces of vector-valued Lipschitz maps
}

\keywords{
surjective linear isometry,  $C^{*}$-algebra, hermitian operator,  Lipschitz algebra
}

\subjclass[2020]{
46B04, 47B15, 46L52
}


\begin{abstract} 
We study hermitian operators and isometries on spaces of vector-valued Lipschitz maps with the sum norm: $\|\cdot\|_{\infty}+L(\cdot)$. There are two main theorems in this paper. Firstly, we prove that every hermitian operator on $\Lip(X,E)$, where $E$ is a complex Banach space, is a generalized composition operator. Secondly, we give  a complete description of  unital surjective complex linear isometries on $\Lip(X,\ma)$ where $\ma$ is a unital factor $C^{*}$-algebra. These results improve previous results stated by the author. 

\end{abstract}
\maketitle
\section{Introduction and Main results}

Given a compact metric space $X$ and a complex Banach space $(E,\|\cdot\|_{E})$, 
a map $F: X \to E$ is said to be Lipschitz if 
\[
L(F):=\sup_{x \neq y \in X}\left\{ \frac{\|F(x)-F(y)\|_{E}}{d(x,y)} \right\} < \infty.
\]
We denote a space of all E-valued Lipschitz maps on $X$ by $\lxe$.  In the case $E=\mathbb{C}$, we simply write $\lx$. The Lipschitz space $\lxe$ is a Banach space with the sum norm; 
\[
\|F\|_{L}=\sup_{x \in X}\|F(x)\|_{E}+L(F),  \quad F \in \lxe.
\]
In particular, $\lxe$ endowed with $\|\cdot\|_{L}$ is a Banach algebra if $E$ is a Banach algebra. 

\subsection{Surjective linear isometries}
Let $\ma$ be a unital $C^{*}$-algebra. We study unital surjective linear isometries on $\lxm$ with $\|\cdot\|_{L}$. We explain the motivation for our study. 
Kadison in \cite{kadison} obtained the following characterization of surjective complex linear isometries between unital $C^{*}$-algebras. 
Let $\ma_i$ be unital $C^{*}$-algebras for $i=1,2$. Let $U:\ma_1 \to \ma_2$ be a surjective linear isometry. Then there are a unitary element $u \in \ma_2$ and  a Jordan $*$-isomorphism $\psi: \ma_1 \to \ma_2$ such that 
$U(a)=u\psi(a)$ for any $ a \in \ma.$
This has a remarkable and beautiful consequence that the unital surjective linear isometries between unital $C^{*}$-algebras are Jordan $*$-isomorphisms.
A lot of  researchers  have been attracted to consider whether every surjective linear isometry on algebras is closely related to an isomorphism on the algebras. We deal with surjective linear isometries on Banach algebras of continuous maps taking values in a unital $C^{*}$-algebra. For any unital $C^{*}$-algebra $\ma$, we denote by $C(K,\ma)$ the Banach algebra,  with the supremum norm, of  all continuous maps on a compact Hausdorff space $K$ taking values in $\ma$. Let us consider surjective linear isometries between $C(K,\ma)$. 
Since $C(K,\ma)$ is a unital $C^{*}$-algebra, the celebrated theorem due to Kadison tells that every unital surjective linear isometry  is  a Jordan $*$-isomorphism. In particular, if $\ma_i$ are unital factor $C^{*}$-algebras for $i=1,2$, in \cite[Corollary 5]{kho} they showed that every surjective linear isometry $U: C(K_1,\ma_1) \to C(K_2,\ma_2)$ is a weighted composition operator of the form 
\begin{equation}\label{eeee}
UF(y)=u\psi_y(F(\varphi(y))),
\end{equation}
where $\varphi:K_2 \to K_1$ is a homeomorphism, $\{\psi_{y}\}_{y \in K_2}$ is a strongly continuous family of Jordan $*$-isomorphisms from $\ma_1$ onto $\ma_2$, and $u \in C(K_2,\ma_2)$ is a unitary element.  
One may wonder whether any surjective linear isometries from $\lxom$ onto $\lxtm$ is also a weighted composition operator similar to (\ref{eeee}). 
First we introduce the results by the author in \cite{hermitian1}. We showed every hermitian operator on $\lxe$ is a generalized composition operator under the more restrictive condition that $E$ is of finite dimension. 
Furthermore, we obtained the following theorem by using the notion of hermitian operators.  We denote the Banach algebra of complex matrices of order $n$ by $M_{n}(\mathbb{C})$.  
\begin{theorem}[Theorem 3.3 in \cite{hermitian1}]
Let $X_i$ be compact metric spaces for $i=1,2$. The map $U : (\Lip(X_1,M_{n}(\mathbb{C})), \|\cdot\|_{L}) \to (\Lip(X_2,M_{n}(\mathbb{C})), \|\cdot\|_{L})$ is a linear surjective isometry such that $U(1) = 1$ if and only if there exist
a unitary matrix $ V \in M_{n}(\mathbb{C})$ and a surjective isometry $\varphi:X_2 \to X_1$  such that
\[
(UF)(x)=VF(\varphi(x))V^{-1}, \quad F \in \Lip(X_1,M_{n}(\mathbb{C})), x \in X_2
\]
or
\[
(UF)(x)=VF^{t}(\varphi(x))V^{-1}, \quad F \in \Lip(X_1,M_{n}(\mathbb{C})), x \in X_2,
\]
where $F^{t}(y)$ denote transpose of $F(y)$ for $y \in X_1$.
\end{theorem}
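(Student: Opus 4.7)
The sufficiency direction is a direct verification: both displayed forms are unital, surjective (since $V$ is unitary and $\varphi$ is a bijection), preserve the sup-norm (because conjugation by a unitary and transposition are isometries of $M_n(\mathbb{C})$), and preserve the Lipschitz seminorm (because $\varphi$ is an isometry of $X_2$ onto $X_1$). Hence they preserve $\|\cdot\|_L$. For necessity, the plan is to leverage the classification of hermitian operators on $\Lip(X_i, M_n(\mathbb{C}))$ established in \cite{hermitian1} --- every such hermitian operator is a generalized composition operator --- together with the fact that the map $T \mapsto UTU^{-1}$ bijects the hermitian operators on the two sides, since $U$ is a unital surjective linear isometry.

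Concretely, I would apply this transfer to the left multiplication operators $L_a: F \mapsto a F$, where $a \in M_n(\mathbb{C})$ is hermitian and identified with the constant function $a \cdot \mathbf{1}_{X_1}$; each $L_a$ is a hermitian operator on $\Lip(X_1, M_n(\mathbb{C}))$. The image $U L_a U^{-1}$ is then a hermitian operator on $\Lip(X_2, M_n(\mathbb{C}))$, hence a generalized composition operator, and tracing this through yields that for each $y \in X_2$ the evaluation $\psi_y(a) := U(a \cdot \mathbf{1}_{X_1})(y)$ defines a unital linear isometry of $M_n(\mathbb{C})$. By Kadison's theorem, each $\psi_y$ is a Jordan $*$-isomorphism, so of the form $a \mapsto V(y) a V(y)^{*}$ or $a \mapsto V(y) a^{t} V(y)^{*}$ for some unitary $V(y)$.

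The next key observation is that $y \mapsto \psi_y(a)$ has zero Lipschitz seminorm for every fixed $a$: since $U$ preserves $\|\cdot\|_L$ and $\|a \cdot \mathbf{1}_{X_1}\|_L = \|a\|$, the equation $\|U(a \cdot \mathbf{1}_{X_1})\|_L = \|a\|$ forces $L(y \mapsto \psi_y(a)) = 0$, so $\psi_y$ is independent of $y$. Writing $\psi_y = \psi$, one obtains a single Jordan $*$-isomorphism, hence $\psi(a) = V a V^{-1}$ or $\psi(a) = V a^{t} V^{-1}$ for a fixed unitary $V$. To extract $\varphi: X_2 \to X_1$, I would then examine the action of $U$ on non-constant Lipschitz functions: once $U$ is pinned down on constants, the generalized composition description of $U L_a U^{-1}$ combined with the factor structure of $M_n(\mathbb{C})$ produces a point $\varphi(y) \in X_1$ such that $(UF)(y) = V F(\varphi(y)) V^{-1}$ or the transpose variant. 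Bijectivity of $\varphi$ then follows by the symmetric argument applied to $U^{-1}$, and preservation of the full $\|\cdot\|_L$ (testable on scalar Lipschitz distance functions of the form $d(\cdot, x_0) \cdot I$) forces $\varphi$ to be a surjective isometry.

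The main technical obstacle I anticipate is the step in the second paragraph: passing from the abstract generalized composition description of $U L_a U^{-1}$ to the concrete fact that $U$ itself acts on constants via a pointwise Jordan $*$-isomorphism-valued family $\psi_y$, rather than a more general hermitian-operator-valued family on $M_n(\mathbb{C})$. This requires a delicate use of the additive structure of the sum norm together with the finite-dimensional $C^{*}$-algebraic structure of $M_n(\mathbb{C})$, and is the place where this proof must genuinely improve upon the earlier argument.
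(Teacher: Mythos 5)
Your overall strategy---transferring hermitian operators via $T \mapsto UTU^{-1}$, using the fact that hermitian operators on $\Lip(X, M_n(\mathbb{C}))$ are generalized composition operators, and then invoking Kadison---is exactly the strategy of the paper (it is how Theorem \ref{isometrydd}, of which this statement is the special case $\ma_i = M_n(\mathbb{C})$, is proved in Section \ref{44444}). The sufficiency direction and the treatment of constants are essentially fine, though your logic there is slightly out of order: you assert that each $\psi_y$ is an isometry before knowing that $U(a\cdot 1)$ is constant, whereas the clean route is to note that $UL_aU^{-1}$ has the form $F\mapsto \phi_a\circ F$ with $\phi_a$ hermitian and to apply it to the constant function $1$, which gives $U(a\cdot 1)(y)=\phi_a(1)$ for every $y$ at once; your norm argument for constancy is then unnecessary. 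The extension step you leave implicit (from $\Lip(X_1)\otimes M_n(\mathbb{C})$ to all of $\Lip(X_1,M_n(\mathbb{C}))$) is harmless here because the two spaces coincide in finite dimensions, although in the paper's general theorem that step requires the T-set machinery of Section \ref{33333}.

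The genuine gap is in the extraction of $\varphi$. Conjugating the multiplication operators $L_a$ only controls $U$ on the constant functions; it gives no information about $U(f\otimes 1)$ for non-constant scalar $f$, and your sentence ``the generalized composition description of $UL_aU^{-1}$ combined with the factor structure of $M_n(\mathbb{C})$ produces a point $\varphi(y)$'' is precisely where an argument is missing. The paper's proof uses the other half of Sinclair's decomposition of hermitian operators on a $C^{*}$-algebra: for skew-hermitian $b$, the inner $*$-derivation $D(a)=ba-ab$ yields a hermitian operator $i\widehat{D}$; conjugating by $U^{-1}$, evaluating at $1$ to kill the multiplication summand, and applying the resulting identity to $f\otimes 1$ shows that $U(f\otimes 1)(y)$ commutes with every $b$, hence lies in the center $\mathbb{C}1$ by the factor hypothesis. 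Only then does one obtain a scalar map $P_U$ on $\Lip(X_1)$, to which the known scalar-valued isometry theorem applies to produce the surjective isometry $\varphi$; and only after that can $U(f\otimes a)=f(\varphi(\cdot))\psi(a)$ be assembled by composing $L_a$ with $U(f\otimes 1)$. Without this derivation/commutant step (or some substitute for it), your argument never reaches $\varphi$, and the claim that preservation of $\|\cdot\|_L$ on functions of the form $d(\cdot,x_0)\cdot I$ forces $\varphi$ to be an isometry is likewise left unsupported.
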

 Although the arguments of the proof have remained limited to the case that $E$ is finite dimensional, this is the first result on surjective linear isometries on $\lxe$, where $E$ is a non-commutative Banach algebra. In this framework, it seems natural to ask questions about further developments. 
The aim of this paper is to develop our knowledge on hermitian operators and isometries on $\lxe$ and establish a infinite dimensional version of \cite{hermitian1}. 
More precisely, we prove the next theorem.
\begin{theorem}\label{isometrydd}
Let $X_i$ be compact metric spaces and $\ma_i$ unital factor $C^{*}$-algebras for $i=1,2$. The map $U: (\lxom, \|\cdot\|_{L}) \to (\lxtm, \|\cdot\|_{L})$ is a surjective complex linear isometry such that $U(1)=1$ if and only if there exist a unital surjective complex linear isometry $\psi:\mathcal{A}_1 \to \mathcal{A}_2$ and a surjective isometry $\varphi:X_2 \to X_1$ such that
\[
UF(y)=\psi(F(\varphi(y))), \quad F \in \lxom, y \in X_2.
\] 
\end{theorem}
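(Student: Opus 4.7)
The backward direction is a routine verification that the prescribed map preserves the sum norm. For the forward direction, my strategy is to apply the first main theorem of the paper---every hermitian operator on $\Lip(X,E)$ is a generalized composition operator---to hermitian operators on $\lxom$ that encode the $C^{*}$-structure of $\mathcal{A}_1$, and then decode their images under conjugation by $U$.

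For each self-adjoint $h\in\mathcal{A}_1$ I would consider the pointwise Jordan multiplication $J_h:\lxom\to\lxom$ defined by $J_h(F)(x)=\tfrac{1}{2}(hF(x)+F(x)h)$. Since Jordan multiplication by a self-adjoint element is hermitian on $\mathcal{A}_1$ and $J_h$ commutes with point evaluations, each $e^{itJ_h}$ preserves both $\|\cdot\|_\infty$ and the Lipschitz seminorm, so $J_h$ is hermitian on $(\lxom,\|\cdot\|_L)$. Consequently $V_h:=UJ_hU^{-1}$ is hermitian on $\lxtm$, and the first main theorem presents $V_h$ as a generalized composition operator. Exploiting the intertwining $V_hU=UJ_h$, the identity $U(1)=1$, the point-constant nature of $J_h$, and the factor hypothesis on the $\mathcal{A}_i$, I expect to extract the ansatz
\[
UF(y)=\psi_y(F(\sigma(y))),
\]
where $\sigma:X_2\to X_1$ is a map and $\{\psi_y\}_{y\in X_2}$ is a family of unital complex linear isometries $\mathcal{A}_1\to\mathcal{A}_2$ (each a Jordan $*$-isomorphism by Kadison's theorem).

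Given the ansatz, the remaining two steps are short. First, constancy of $\psi_y$ in $y$: applied to a constant $F\equiv a$, one has $\|UF\|_\infty=\sup_y\|\psi_y(a)\|=\|a\|=\|F\|_L$, which forces $L(UF)=0$, so $y\mapsto\psi_y(a)$ is constant; varying $a$ yields a single unital surjective complex linear isometry $\psi:\mathcal{A}_1\to\mathcal{A}_2$. Second, for $\sigma$: applying the isometry condition to elementary tensors $f\cdot\mathbf{1}$ with $f\in\Lip(X_1)$ gives $\|f\circ\sigma\|_\infty+L(f\circ\sigma)=\|f\|_\infty+L(f)$, and a classical argument on isometries of scalar Lipschitz algebras shows $\sigma$ is a surjective isometry, which we then name $\varphi$.

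The main obstacle is the derivation of the ansatz itself, namely showing that $V_h$ reduces to a pointwise Jordan multiplication of the form $V_h(G)(y)=J_{\psi_y(h)}(G(y))$. This is where the factor hypothesis is essential: without it, the generalized composition form of $V_h$ could couple the spatial variable $y$ with central elements of $\mathcal{A}_2$, producing mixed terms that obstruct a clean pointwise-hermitian reading. Since $\mathcal{A}_i$ are factors, $Z(\mathcal{A}_i)=\mathbb{C}\cdot\mathbf{1}$, removing all central-valued freedom, and the hermitian analysis forces the pointwise Jordan structure that drives the rest of the proof.
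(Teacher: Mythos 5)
Your overall strategy---conjugating pointwise hermitian multiplication-type operators by $U$ and decoding them through the first main theorem---is the right one, and your Jordan multiplication $J_h$, which decomposes as $M_h+iD$ with $D=\tfrac{i}{2}[h,\cdot]$, is a legitimate variant of the paper's separate use of $M_{1\otimes h}$ and of the inner derivations $i\widehat{D}$ with $D=[b,\cdot]$, $b^{*}=-b$. But there are two genuine gaps. First, the passage from the intertwining relations $UJ_hU^{-1}=\widehat{\phi_h}$ to the ansatz $UF(y)=\psi_y(F(\sigma(y)))$ is asserted rather than derived; in particular you never actually produce the map $\sigma$. The paper obtains $\varphi$ by showing that $U(f\otimes 1)$ commutes with every $1\otimes b$ with $b^{*}=-b$ (this is where the inner derivations and the factor hypothesis really enter), hence $U(f\otimes 1)(y)\in\mathcal{A}_2\cap\mathcal{A}_2'=\mathbb{C}1$, so that $U$ induces a surjective isometry of $\Lip(X_1)$ onto $\Lip(X_2)$ to which the known scalar-valued result applies. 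Your $J_h$ by itself does not obviously yield this centrality, since its commutator part is inseparably tied to its multiplication part; you would still need the skew-adjoint inner derivations (or an equivalent device) as a separate input.

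Second, and more seriously: even once one knows $U(f\otimes a)(y)=\psi(f(\varphi(y))a)$ on all elementary tensors, this does \emph{not} determine $U$ on all of $\Lip(X_1,\mathcal{A}_1)$, because for infinite-dimensional $\mathcal{A}_1$ the subspace $\Lip(X_1)\otimes\mathcal{A}_1$ is proper and is not known to be dense in $(\Lip(X_1,\mathcal{A}_1),\|\cdot\|_L)$. Your plan simply asserts the ansatz for arbitrary $F$ and declares the remaining steps short; that is precisely the point where the finite-dimensional argument of the earlier paper breaks down, and it is why the present paper builds the T-set machinery of Section 3 culminating in Proposition \ref{3.6} (a surjective isometry that is a weighted composition operator on $\Lip(X_1)\otimes E_1$ is one globally, proved via the sets $S(x,\mathbb{U},\mathbb{T})$ and the vanishing lemma showing $F(x_0)=0$ implies $UF(\varphi^{-1}(x_0))=0$). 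Without this extension step, or some substitute for the missing density, your proof is incomplete.
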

Indeed, Theorem \ref{isometrydd} also gives an answer to the above question (that is  whether any surjective linear isometries from $\lxom$ onto $\lxtm$ is also a weighted composition operator similar to (\ref{eeee})). 

In case that $E$ is a finite dimensional Banach space,  it follows from \cite[Lemma 2.1]{hermitian1}  that  $\Lip(X) \otimes E=\lxe$. If $E$ is of infinite-dimension, $\Lip(X) \otimes E$ does not coincide with $\Lip(X,E)$.  Moreover it is not known whether $\Lip(X) \otimes E$ is dense in $\lxe$ with $\|\cdot\|_{L}$ or not. When $E$ is of infinite dimension, the representation of $\lxe$ is more complicated. Thus to describe isometries and hermitian operators on $\lxe$, where $E$ is of infinite dimension,  is much more difficult than the case that $E$ is of  finite dimension. 
In order to achieve any further progress, we need to make several improvements and extensions compared to the paper \cite{hermitian1}. 

The paper is organized as follows.
In the rest of the introduction we provide basic background on the study of hermitian operators. In Section \ref{hhhhh} we study hermitian operators on $\lxe$. The main theorem of Section \ref{hhhhh} is Theorem \ref{hermitian}.  For any a complex Banach space $E$, we prove that every hermitian operator on $\lxe$ is a generalized composition operator. This is a generalization of the characterization of hermitian operators on $\lxe$ for finite dimensional Banach spaces $E$  in \cite{hermitian1}.  In Section \ref{33333}, we introduce the concept of T-sets due to Myers. By the notion of T-sets we present properties of the unit ball of the dual space of $\lxe$.  Indeed the extreme points of the unit ball of the dual space of $\lxe$ are quite complicated. Thus we study T-sets instead of  extreme points. The main statement in Section \ref{33333} is Proposition \ref{3.6}. We show that if  a surjective linear isometry between $\lxe$ is a weighted composition operator when restricted to $\Lip(X) \otimes E$, then it is a weighted composition operator. Since the representation of $\Lip(X) \otimes E$ is much easier than that of $\lxe$, Proposition \ref{3.6} is successful in describing the surjective linear isometries on $\lxe$. In Section \ref{44444} we present the proof of Theorem \ref{isometrydd}. 

\subsection{Hermitian operators}
A bounded operator $T$ on a complex normed space $(V,\|\cdot\|_{V})$ is hermitian if
$[Tv,v]_{V} \in \mathbb{R}$ for any $v \in V$, where $[\cdot,\cdot]_{V}$ is a semi-inner product on $V$ that is compatible with the norm $\|\cdot\|_{V}$. The definition does not depend on the choice of semi-inner products (see \cite{BD}).
A complete description of hermitian operators on Banach spaces has been studied for a long period of time.  We refer the reader to  \cite{FJB, FJB08} for further information about hermitian operators.

Fleming and Jamison in \cite{FJ} turned their attention to the vector-valued case. Let $E$ be a complex Banach space. They obtained the first characterization for  hermitian operators between Banach spaces of $E$-valued continuous functions as follows:  

Let $T$ be a  hermitian operator on $C(K,E)$, where $K$ is a compact Hausdorff space. Then for each $t \in K$ there is a hermitian operator $\phi(t)$ on $E$ such that 
\[
TF(t)=\phi(t)(F(t)), \quad t \in K.
\]

What is the general form of hermitian operators between Banach spaces of $E$-valued Lipschitz maps?  
Botelho, Jamison, Jim\'enez-Vargas and Villegas-Vallecillos  in \cite{BJJMm} obtained a characterization for hermitian operators on $\lxe$ with the max norm; $\|\cdot\|_{M}=\max \{ \|\cdot\|_{\infty}, L(\cdot) \}$ as follows:

Let $X$ be a compact and 2-connected metric space and $E$ a complex Banach space. Then  $T: (\lxe,\|\cdot\|_{M}), \to (\lxe,\|\cdot\|_{M})$ is a hermitian operator
 if and only if there exists a hermitian operator $\phi: E \to E$ such that
\begin{equation*}
TF(x)=\phi(F(x)),  \quad F \in \lxe, \quad  x \in X.
\end{equation*}
How about the case $\lxe$ with $\|\cdot\|_{L}$?
One may think that each feature of the two norms does not make a big difference, but it is not. The studies of the classes of operators on $\lxe$ depend heavily on the properties of the norm. 
The standard approach to the studies of isometries or related operators on Banach spaces relies on a characterization of the extreme points of the closed unit ball of the corresponding dual spaces. But the extreme points of the closed unit ball of the dual space of  $(\lxe,\|\cdot\|_{L})$ are completely different from those of $(\lxe,\|\cdot\|_{M})$. The former is much complicated. For operators on $(\lxe,\|\cdot\|_{L})$, it is non trivial to derive a representation from the action of their adjoints,  
so we have to work quite hard to give a representation. 
Actually, in the case of hermitian operators on $(\lxe,\|\cdot\|_{L})$, difficulties to give a representation 
remain even if we have a representation of  hermitian operators on  $(\lxe,\|\cdot\|_{M})$. 
Indeed, Botelho, Jamison, Jim\'enez-Vargas and Villegas-Vallecillos proved hermitian operators between $\lx$ with $\|\cdot\|_{L}$ in \cite{BJJMs} are composition operators. Recently, the author of this paper generalized to $\lxe$, where $E$ is a finite dimensional complex Banach space in \cite{hermitian1}. But it has not been solved in general. In this paper, we give a complete representation for any complex Banach space $E$. 

\subsection{Notations and Remarks}
Throughout this paper, $X$,  $X_1$and $X_2$ are compact metric spaces, and  $E$, $E_1$ and $E_2$ are complex Banach spaces. In addition  $\ma$, $\ma_1$ and $\ma_2$ are unital $C^{*}$-algebras. For a unital $C^{*}$-algebra $\ma$, if its center is trivial, i.e. $A \cap A'=\mathbb{C}1$ we call it a unital factor $C^{*}$-algebra. 
 For Banach space $E$, we denote the closed unit ball of $E$ by $\mathbb{B}(E)$, and the closed unit ball of the dual space $E^{*}$ by $\es$.  We also denote the unit sphere of $E$ by $\mathbb{S}(E)$. 
For any $f \in \lx$ and $e \in E$, we define $f \otimes e: X \to E$ by
\[
(f\otimes e)(x)=f(x)e.
\]
We have $f \otimes e \in \lxe$ such that 
$\|f \otimes e\|_{\infty}=\|f\|_{\infty}\|e\|_{E}$
and
$L(f \otimes e )=L(f)\|e\|_{E}$. 
This implies that $\|f\otimes e\|_{L}=\|f\|_{L}\|e\|_{E}$. We see that $f \otimes e$ is an element of the algebraic tensor product space $\lx \otimes E$ with the crossnorm. 

Recall that the purpose of this paper is to generalize the theorems in \cite{hermitian1}. Although we need new approaches and additional arguments, some arguments remain valid. 
Similar arguments may be found in \cite{hermitian1}, but we adapt these to our setting and give proofs as accurately as possible. 
\section{A characterization of hermitian operators on $\lxe$}\label{hhhhh}
Firstly, we would like to consider hermitian operators on $(\lxe, \|\cdot\|_{L})$.
We write $\wx=\{(x,y) \in X^2 ; x \neq y\}$. 
Let $\beta(\wx \times \es)$ be the Stone-\v Cech compactification of  $\wx \times \es$.
For any $F \in \lxe$, we denote by $\widetilde{F}:\beta(\wx \times \es)\to \mathbb{C}$ the unique continuous extension of the bounded continuous function  $((x,y), e^{*})\mapsto e^{*}(\frac{F(x)-F(y)}{d(x,y)})$
on $\wx \times \es$. Since we have $\|\widetilde{F}\|_{\infty}=L(F)$  for any $F \in \lxe$, we can define a linear isometric embedding $\Gamma: (\lxe, \|\cdot\|_{L}) \to (C(X\times \beta(\wx \times \es) \times \mathbb{B}(E), E), \|\cdot\|_{\infty})$ by $\Gamma(F)(x, \xi, e)=F(x)+\widetilde{F}(\xi) e$. Moreover we define a set $P_{G}$ by
\[
P_{G}=\{t \in X\times \beta(\wx \times \es) \times \mathbb{B}(E); \|\Gamma(G)(t)\|_{E}=\|\Gamma(G)\|_{\infty}=\|G\|_{L}\}.
\]
\begin{lemma}\label{empty}
For any $G \in \lxe$, we have $P_{G} \neq \emptyset$.
\end{lemma}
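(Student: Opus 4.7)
The plan is to exhibit one concrete point $t_0=(x_0,\xi_0,e_0)$ at which $\|\Gamma(G)(t_0)\|_E$ reaches the value $\|G\|_L=\|G\|_{\infty}+L(G)$. Since the triangle inequality already yields the contractive bound
\[
\|\Gamma(G)(x,\xi,e)\|_E\le\|G(x)\|_E+|\widetilde{G}(\xi)|\,\|e\|_E\le\|G\|_{\infty}+L(G),
\]
producing such a $t_0$ simultaneously confirms that $\Gamma$ is isometric at $G$ and that $P_G\neq\emptyset$. The whole argument is thus a sharpening of the triangle inequality to an equality at a well-chosen point.

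The first step is to pick optimisers of the two summands separately. By compactness of $X$ and continuity of the scalar map $x\mapsto\|G(x)\|_E$, there is $x_0\in X$ with $\|G(x_0)\|_E=\|G\|_{\infty}$. By compactness of $\beta(\wx\times\es)$, continuity of $\widetilde{G}$, and the identity $\|\widetilde{G}\|_{\infty}=L(G)$ noted in the setup, there is $\xi_0\in\beta(\wx\times\es)$ with $|\widetilde{G}(\xi_0)|=L(G)$. The second step is to align these by a suitable unit vector: assuming $G(x_0)\neq 0$ and $\widetilde{G}(\xi_0)\neq 0$, set
\[
e_0=\frac{\overline{\widetilde{G}(\xi_0)}}{|\widetilde{G}(\xi_0)|}\cdot\frac{G(x_0)}{\|G(x_0)\|_E}\in\mathbb{B}(E).
\]
A direct calculation shows $\widetilde{G}(\xi_0)e_0=\bigl(|\widetilde{G}(\xi_0)|/\|G(x_0)\|_E\bigr)G(x_0)$, so $G(x_0)+\widetilde{G}(\xi_0)e_0$ is a positive real multiple of $G(x_0)$ whose norm is exactly $\|G(x_0)\|_E+|\widetilde{G}(\xi_0)|=\|G\|_{\infty}+L(G)=\|G\|_L$. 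Hence $(x_0,\xi_0,e_0)\in P_G$.

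The degenerate cases are handled trivially: if $\|G\|_{\infty}=0$ then $G\equiv 0$ and every $t$ belongs to $P_G$; if $L(G)=0$ then $\widetilde{G}\equiv 0$ and any $(x_0,\xi,e)$ with $\|G(x_0)\|_E=\|G\|_{\infty}$ works. No step here poses a real obstacle; the only inputs are the compactness of $X$ and of $\beta(\wx\times\es)$ together with the one-line choice of $e_0$ that turns the triangle inequality into an equality, and one should note that in the infinite-dimensional setting the ball $\mathbb{B}(E)$ need not be compact, but this plays no role because $e_0$ is written down explicitly rather than produced by a compactness argument on $\mathbb{B}(E)$.
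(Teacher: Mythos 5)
Your proof is correct and follows essentially the same route as the paper: optimize $\|G(x)\|_E$ and $|\widetilde{G}(\xi)|$ separately by compactness, then choose a unimodular scalar multiple of $G(x_0)/\|G(x_0)\|_E$ as the third coordinate so that the two terms add constructively (your $\overline{\widetilde{G}(\xi_0)}/|\widetilde{G}(\xi_0)|$ is exactly the paper's $\alpha$). Your explicit treatment of the degenerate case $L(G)=0$ is a minor tidying of the same argument.
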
 
\begin{proof}
If $G=0$, we have $(x_0, \xi, e) \in P_{G}$  for any $(x_0, \xi, e) \in X \times \beta(\wx \times \es) \times \mathbb{B}(E)$. Thus let $G \in \lxe$ with $G \neq 0$. Since $\beta(\wx \times \es)$ is compact, there exists $\xi \in \beta(\wx \times \es)$ such that $|\widetilde{G}(\xi)|=\|\widetilde{G}\|_{\infty}=L(G)$. There are $x_0 \in X$ such that $\|G(x_0)\|_{E}=\|G\|_{\infty}$ and $\alpha \in \mathbb{C}$ with $|\alpha|=1$ such that $\alpha\widetilde{G}(\xi)=\|\widetilde{G}\|_{\infty}=L(G)$. We get
\begin{equation*}
\begin{split}
\|\Gamma(G)(x_0, \xi, \frac{\alpha}{\|G(x_0)\|_{E}}G(x_0))\|_{E}=\|G(x_0)+\widetilde{G}(\xi) \frac{\alpha}{\|G(x_0)\|_{E}}G(x_0)\|_{E}\\
=(1+L(G)\frac{1}{\|G\|_{\infty}})\|G\|_{\infty}=\|G\|_{\infty}+L(G)=\|G\|_{L}.
\end{split}
\end{equation*}
This implies that $(x_0, \xi, \frac{\alpha}{\|G(x_0)\|_{E}}G(x_0) )\in P_{G}$.
\end{proof}
By Lemma \ref{empty} and the axiom of choice, there exists a choice function 
\[
\Psi: \lxe \to X\times \beta(\wx \times \es) \times \mathbb{B}(E)
\]
such that $\Psi(G) \in P_{G}$ for every $G\in \lxe$. 
Let $[\cdot,\cdot]_{E}$ on $E$ be a semi-inner product  which is compatible with the norm of $E$. Define a map $[\cdot,\cdot]_{\Psi L}: \lxe \times \lxe \to \mathbb{C}$ given by
\begin{equation}\label{choice}
[F,G]_{\Psi L}=[\Gamma(F)(\Psi(G)), \Gamma(G)(\Psi(G))]_{E},\quad F, G \in \lxe.
\end{equation}
It is easy to check that  $[\cdot,\cdot]_{\Psi L}$ is a semi-inner product on $\lxe$ compatible with the norm $\|\cdot\|_{L}$.
Now we get the following lemma. The basic idea of the proof is the same as \cite[Lemma 2.3]{hermitian1}. 
\begin{lemma}\label{tako}
Let $T$ be a hermitian operator on $(\lxe, \|\cdot\|_{L})$. Then 
\[
T(1 \otimes e) \in 1 \otimes E
\]
for any $e \in E$.
\end{lemma}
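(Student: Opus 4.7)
The key observation is that $1 \otimes e$ is a constant map, so its Lipschitz constant is zero, $\widetilde{1 \otimes e} \equiv 0$, and $\Gamma(1 \otimes e)(x,\xi,f) = e$ for every $(x,\xi,f) \in X \times \beta(\wx \times \es) \times \mathbb{B}(E)$. Since $\|1 \otimes e\|_{L} = \|e\|_{E}$, we have $\|\Gamma(1 \otimes e)(x,\xi,f)\|_{E} = \|1 \otimes e\|_{L}$ at every point, so $P_{1 \otimes e}$ equals the whole product space. A choice function $\Psi$ is therefore unconstrained at the argument $1 \otimes e$ and may be assigned any value there.

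Assuming $e \neq 0$ (the case $e = 0$ being trivial), the plan is to exploit this freedom. For an arbitrary target $(x_0, \xi_0, f_0) \in X \times \beta(\wx \times \es) \times \mathbb{B}(E)$, I would modify $\Psi$ at the single input $1 \otimes e$ to take the value $(x_0, \xi_0, f_0)$, leaving it unchanged elsewhere. Formula (\ref{choice}) with this new $\Psi$ still defines a semi-inner product compatible with $\|\cdot\|_{L}$, and because the hermitian property of $T$ is independent of the compatible semi-inner product chosen (see \cite{BD}), evaluating at $F = G = 1 \otimes e$ gives
\[
\bigl[\, T(1 \otimes e)(x_0) + \widetilde{T(1 \otimes e)}(\xi_0)\, f_0,\; e \,\bigr]_{E} \in \mathbb{R}.
\]
Linearity in the first slot, applied with $f_0 = 0$, yields $[T(1 \otimes e)(x_0), e]_{E} \in \mathbb{R}$; subtracting this from the previous relation leaves
\[
\widetilde{T(1 \otimes e)}(\xi_0)\, [f_0, e]_{E} \in \mathbb{R}
\]
for every $\xi_0 \in \beta(\wx \times \es)$ and every $f_0 \in \mathbb{B}(E)$.

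To conclude, I specialise $f_0$: taking $f_0 = e/\|e\|_{E}$ gives $[f_0, e]_{E} = \|e\|_{E} > 0$, which forces $\widetilde{T(1 \otimes e)}(\xi_0) \in \mathbb{R}$, whereas taking $f_0 = i e/\|e\|_{E}$ gives $[f_0, e]_{E} = i\|e\|_{E}$, which forces $\widetilde{T(1 \otimes e)}(\xi_0) \in i\mathbb{R}$. The two conditions intersect only at $0$, so $\widetilde{T(1 \otimes e)} \equiv 0$ on $\beta(\wx \times \es)$, and consequently $L(T(1 \otimes e)) = \|\widetilde{T(1 \otimes e)}\|_{\infty} = 0$. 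Hence $T(1 \otimes e)$ is a constant $E$-valued map, which is exactly the statement that $T(1 \otimes e) \in 1 \otimes E$. The main subtlety is to be sure that redefining $\Psi$ at the single argument $1 \otimes e$ really produces another compatible semi-inner product rather than breaking linearity or positivity in (\ref{choice}); this works because for any fixed $G$ the value $[F,G]_{\Psi L}$ depends on $\Psi$ only through $\Psi(G)$, and the hermitian property is then invoked in its semi-inner-product-free form.
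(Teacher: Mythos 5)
Your proof is correct and follows essentially the same route as the paper: both exploit the fact that $P_{1\otimes e}$ is the whole product space, prescribe the choice function there, and use a rotation argument inside the resulting semi-inner product to force the Lipschitz part of $T(1\otimes e)$ to vanish. The only cosmetic difference is that you vary the third coordinate $f_0$ (through $0$, $e/\|e\|_E$, $ie/\|e\|_E$) to get $\widetilde{T(1\otimes e)}(\xi_0)\in\mathbb{R}\cap i\mathbb{R}=\{0\}$, whereas the paper fixes that coordinate equal to $e$ and rotates the functional $e^{i\theta}e^{*}$ in the second coordinate; either way one concludes $L(T(1\otimes e))=0$.
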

\begin{proof}
Let $e\in E$. 
If $e=0$, then $T(1\otimes e)=T(0)=0=1\otimes 0 \in 1\otimes E$. 
Thus we assume that $0 \neq e \in \mathbb{B}(E)$.
Fix $x' \in X$, $(x,y)\in \wx$ and $e^{*}\in \es$. Let $\theta \in [0, 2\pi)$, we obtain
\begin{multline}\label{0.4}
\lefteqn{\Gamma(1 \otimes e)(x',((x,y),e^{i\theta}e^{*}),e)}\\
=(1 \otimes e)(x')+e^{i\theta}e^{*}\left(\frac{(1 \otimes e)(x)- (1\otimes e)(y)}{d(x,y)}\right) e=e+0 e =e.
\end{multline}
This implies that 
\[
\|\Gamma(1 \otimes e)(x',((x,y),e^{i\theta}e^{*}),e)\|_{E}=\|1\otimes e\|_{L}.
\]
Thus we get $(x',((x,y),e^{i\theta}e^{*}),e) \in P_{1\otimes e}$. Choose a choice function $\Psi_{\theta}: \lxe \to X\times \beta(\wx \times \es) \times \mathbb{B}(E)$ such that 
\[
\Psi_{\theta}(1 \otimes e)=(x',((x,y),e^{i\theta}e^{*}),e)
\]
and define a semi-inner product $[\cdot,\cdot]_{\Psi_{\theta}L}$ on $\lxe$ in the manner as in \eqref{choice}. Since $T$ is a hermitian operator, we have
$[T(1 \otimes e), 1\otimes e]_{\Psi_{\theta}L} \in \mathbb{R}$.
By (\ref{0.4}), it follows that 
\begin{equation}\label{0.5}
\begin{split}
\mathbb{R}&\ni [T(1 \otimes e), 1\otimes e]_{\Psi_{\theta}L}\\
&=[\Gamma(T(1 \otimes e))(\Psi_{\theta}(1 \otimes e)), \Gamma(1 \otimes e)(\Psi_{\theta}(1 \otimes e))]_{E}\\
&=[T(1 \otimes e)(x')+e^{i\theta}e^{*}\left(\frac{T(1 \otimes e)(x)-T(1 \otimes e)(y)}{d(x,y)}\right) e, e]_{E}\\
&=[T(1 \otimes e)(x'),e]_{E}+e^{i\theta}e^{*}\left(\frac{T(1 \otimes e)(x)-T(1 \otimes e)(y)}{d(x,y)}\right)\|e\|^2_{E}.
\end{split}
\end{equation}
As $e \neq 0$, we see that $\|e\|^{2}_{E} >0$. Since $\theta \in [0, 2\pi)$ is arbitrary, it must be 
\begin{equation}\label{0.51}
e^{*}\left(\frac{T(1 \otimes e)(x)-T(1 \otimes e)(y)}{d(x,y)}\right)=0
\end{equation}
for any $e^{*} \in \es$. This implies 
\[
\frac{T(1 \otimes e)(x)-T(1 \otimes e)(y)}{d(x,y)}=0
\]
for any $(x,y) \in \wx$. Thus we deduce 
$L(T(1\otimes e))=0$.
Therefore, there exists $e_0 \in E$ such that $T(1\otimes e)=1 \otimes e_0$.
\end{proof}

Applying Lemma \ref{tako} 
we 
define a map $\phi: E \to E$ by 
\begin{equation}\label{takotako}
T(1 \otimes e)=1 \otimes \phi(e) 
\end{equation}
 for each $e \in E$. 
By (\ref{0.5}) and (\ref{0.51}) we have $\mathbb{R} \ni [T(1 \otimes e), 1\otimes e]_{\Psi_{\theta}L}=[T(1 \otimes e)(x'),e]_{E}$. This implies that 
$[\phi(e),e]_{E} \in \mathbb{R}$ for any $e \in E$. Since $T$ is a bounded linear operator, we get $\phi$ is a hermitian operator on $E$.

We give a complete description of hermitian operators on $\lxe$ with $\|\cdot\|_{L}$, where $E$ is any complex Banach space (without assuming that $E$ is of a finite dimension). 
\begin{theorem}\label{hermitian}
Let $X$ be a compact metric space and $E$ a complex Banach space. Then $T: (\lxe,\|\cdot\|_{L}), \to (\lxe,\|\cdot\|_{L})$ is a hermitian operator
 if and only if there exists a hermitian operator $\phi: E \to E$ such that
\begin{equation}\label{he1}
TF(x)=\phi(F(x)),  \quad F \in \lxe, \quad  x \in X.
\end{equation}
\end{theorem}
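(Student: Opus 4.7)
The if direction follows from the exponential characterization of hermitian operators: given a hermitian $\phi:E\to E$, the one-parameter family $e^{it\phi}$ acts isometrically on $E$ for every $t\in\R$, and the operator $T:F\mapsto\phi\circ F$ satisfies $e^{itT}F=e^{it\phi}\circ F$. Pointwise composition with an isometry on $E$ preserves both $\|\cdot\|_\infty$ and $L(\cdot)$, and hence the sum norm $\|\cdot\|_L$; so $e^{itT}$ is an isometry on $\lxe$ for every $t\in\R$, which means $T$ is hermitian.

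The only-if direction is the substantive half. Lemma~\ref{tako} already furnishes a hermitian operator $\phi:E\to E$ with $T(1\otimes e)=1\otimes\phi(e)$, so the task is to prove $TF(x_0)=\phi(F(x_0))$ for every $F\in\lxe$ and every $x_0\in X$. Writing $F=(F-1\otimes F(x_0))+1\otimes F(x_0)$ and invoking linearity together with $T(1\otimes F(x_0))(x_0)=\phi(F(x_0))$, this reduces to the following claim: \emph{if $G\in\lxe$ satisfies $G(x_0)=0$, then $TG(x_0)=0$.}

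To prove the claim, I would adapt the phase-rotation trick of Lemma~\ref{tako} to a test function capable of localizing at $x_0$. For each $e\in\mathbb{S}(E)$, introduce $H:=\rho G+f\otimes e$, where $f\in\lx$ is a sharp Lipschitz bump peaking at $x_0$ (for instance $f(x)=\max\{1-d(x,x_0)/r,0\}$) and $\rho>0$ is chosen small enough that $\|H\|_\infty=\|H(x_0)\|_E=1$ is attained precisely at $x_0$. For each $\theta\in[0,2\pi)$ and each $e^{*}\in\es$ with $e^{*}(e)=1$, exhibit a point $(x_0,\xi_\theta,v_\theta)\in P_H$ where $\xi_\theta$ is built from the phase-rotated functional $e^{i\theta}e^{*}$ applied to pairs of points realizing $L(H)$, and $v_\theta$ is aligned so that $\widetilde H(\xi_\theta)v_\theta=L(H)\,e$. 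Declaring a choice function $\Psi_\theta$ sending $H$ to $(x_0,\xi_\theta,v_\theta)$, the hermitian condition $[TH,H]_{\Psi_\theta L}\in\R$ with $\theta$ varying forces certain $\theta$-dependent coefficients to vanish; isolating the contribution of $\rho\,TG(x_0)$ from the extraneous term $T(f\otimes e)(x_0)$ (either by running the same argument with $G$ replaced by $0$ and subtracting, or by extracting the part linear in $\rho$) yields $e^{*}(TG(x_0))=0$ for every $e^{*}\in\es$, and hence $TG(x_0)=0$.

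The main obstacle is this last separation step. In Lemma~\ref{tako} the triviality of $\widetilde{1\otimes e}$ made every base point of the domain lie in $P_{1\otimes e}$ and allowed the phase $\theta$ to be moved freely at no cost; in the present setting $P_H$ is strictly smaller, forcing the first coordinate into the norm-attainment set of $H$ and the third coordinate into a prescribed alignment with $H(x_0)$. The peaking function $f$ is what makes $x_0$ the unique maximizer of $\|H\|_\infty$, but it introduces the unknown quantity $T(f\otimes e)(x_0)$ into the identities, and this quantity must be excised cleanly without incurring spurious contributions from the large Lipschitz constant $L(f)=1/r$.
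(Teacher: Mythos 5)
Your ``if'' direction and your reduction of the ``only if'' direction to the claim \emph{$G(x_0)=0$ implies $TG(x_0)=0$} both coincide with the paper's argument. The gap is in how you propose to establish that claim: you never actually dispose of the term $T(f\otimes e)(x_0)$, and the two devices you suggest for excising it do not work. The paper handles this by a separate, essential preliminary step: it proves that $\Tb F:=TF-\phi\circ F$ annihilates all of $\lx\otimes E$ \emph{before} any localization argument. Concretely, for each $e\in\mathbb{S}(E)$ it forms the scalar operator $S_e(f)(x)=[\Tb(f\otimes e)(x),e]_E$ on $\lx$, shows $1\le\|I+itS_e\|\le\|I+it\Tb\|=1+o(t)$ so that $S_e$ is hermitian on $(\lx,\|\cdot\|_L)$, invokes the theorem of Botelho, Jamison, Jim\'enez-Vargas and Villegas-Vallecillos that such operators are real multiples of the identity (hence $S_e=0$), and then applies Lumer's theorem to the operator $e\mapsto\Tb(f\otimes e)(x)$, whose numerical range is $\{0\}$, to get $\Tb(f\otimes e)=0$. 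Only because of this does the perturbed function satisfy $\Tb H=\rho\,\Tb G$, so the extraneous term is simply absent. Your substitutes fail because the hermiticity conditions are statements of the form ``a certain quantity, evaluated at a point of $P_H$ via a choice function depending on $H$, is real'': the norm-attaining set, the point $\xi$ realizing $L(H)$, and the aligned vector all change when $G$ is replaced by $0$ or when $\rho$ varies, so conditions for different $H$ are not comparable and cannot be subtracted; and even where affine dependence on $\rho$ held, $a_0+\rho a_1\in\R$ for several $\rho$ only forces $a_1\in\R$, not $a_1=0$.

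There is a second, independent defect in the localization step. The rotation $e^{*}\mapsto e^{i\theta}e^{*}$ was effective in Lemma~\ref{tako} only because $\widetilde{1\otimes e}\equiv 0$, so every point $(x',((x,y),e^{i\theta}e^{*}),e)$ lies in $P_{1\otimes e}$ with no compensation. For your $H$ with $L(H)>0$, membership in $P_H$ forces $\widetilde{H}(\xi_\theta)v_\theta$ to be a fixed positive multiple of $H(x_0)$; rotating the functional by $e^{i\theta}$ then requires rotating $v_\theta$ by $e^{-i\theta}$, whence $\widetilde{TH}(\xi_\theta)v_\theta$ is independent of $\theta$ and all your conditions coincide, yielding nothing. (Moreover $L(H)$ need not be attained on $\wx\times\es$, so the ``pairs of points realizing $L(H)$'' may not exist; one must take $\xi\in\beta(\wx\times\es)$.) The paper's effective degree of freedom is different: with $\Tb(f\otimes e)=0$ in hand, it builds $G_e=(\|F\|_\infty-|F|)\otimes e+F$, compares the real-part condition at $x_0$ (where $G_e(x_0)=\|F\|_\infty e$ points in the adjustable direction $e$) with the one at a second norm-attaining point $y_0$ of $F$ (where $G_e(y_0)=F(y_0)$ is fixed), subtracts to get $e^{*}(\Tb F(x_0))-f_{y_0}^{*}(\Tb F(y_0))\in\R$, and then rotates $e$ to kill $e^{*}(\Tb F(x_0))$. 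You would need to import both the reduction to the scalar theorem and this two-point comparison to complete your argument.
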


\begin{proof}[Proof of Theorem \ref{hermitian}]
Suppose that $T$ is of the form described as (\ref{he1}) in the statement of Theorem \ref{hermitian}. 
To prove that $T$ is a hermitian operator, we apply the fact that $T$ is a hermitian if and only if $e^{itT}$ is a surjective isometry for every $t \in \mathbb{R}$, see \cite[Theorem 5.2.6]{FJB}. 
Let $t \in \mathbb{R}$. By the definition of $T$, we have
\[
e^{itT}F(x)=e^{it\phi}(F(x))
\]
for any $F \in \lxe$ and $x \in X$. Since $\phi$ is a hermitian on $E$, $e^{it\phi}$ is a surjective isometry.
This implies that 
$\|e^{itT}F\|_{\infty}=\|F\|_{\infty}$
and 
$L(e^{itT}F)=L(F).$
Thus we deduce
$\|e^{itT}F\|_{L}=\|F\|_{L}$
for any $F \in \lxe$. Since $e^{itT}$ is a surjective isometry for every $t \in \mathbb{R}$,  we conclude $T$ is a hermitian operator.


We prove the converse. 
Suppose that $T:\lxe \to \lxe$ is a hermitian operator. Let $\phi$ be the operator defined by \eqref{takotako}.  A similar argument with above yields
an operator from $\lxe$ into itself  given by $F \mapsto \phi\circ F$ is a hermitian operator. Hence we define a hermitian operator $\Tb: \lxe \to \lxe$ by
\[
(\Tb F)(x)=(TF)(x)-\phi(F(x))
\]
for all $F \in \lxe$ and $x \in X$. We shall prove that $\Tb=0$ on $\lxe$ in two steps. \\
\underline{{\bf{Step1.}} For any $f \in \Lip(X)$ and $e \in E$ we have $\Tb(f \otimes e)=0$.} \\
Note that the same idea with \cite[Theorem 2.2]{hermitian1} is valid even if we replace a finite dimensional Banach space $E$ with a Banach space $E$. 

By \cite[p. 10]{FJB} there is a semi-inner product $[\cdot,\cdot]_{E}$ on $E$ compatible with the norm such that  $[e_1,\lambda e_2]_{E}=\bar\lambda[e_1,e_2]_{E}$ for any $e_i \in E$ and $\lambda \in \mathbb{C}$.
Let $e \in \mathbb{S}(E)$. We define a map $S_{e}: \lx \to \lx$ by
\[
S_{e}(f)(x)=[\Tb (f \otimes e)(x),e]_{E}, \quad f \in \lx,\,\, x \in X.
\]
By simple calculations we have $S_e$ is a bounded linear operator with $\|S_e\| \le \|\Tb\|$. 
Moreover we shall prove $S_e$ is a hermitian operator. Let $t \in \mathbb{R}$. By the definition of $S_{e}$ we get $(I+itS_{e})(1)(x)=1$
for any $x \in X$. This implies that 
\begin{equation}\label{0.6}
1 \le \|I+itS_{e}\|.
\end{equation}
On the other hand, let $f \in \lx$. We obtain for any $x, y \in X$,
\begin{equation*}
|(I+itS_{e})(f)(x)|\le \|(I+it\Tb)(f \otimes e)\|_{\infty}
\end{equation*}
and
\begin{equation*}
|(I+itS_{e})(f)(x)-(I+itS_{e})(f)(y)| \le L((I+it\Tb)(f \otimes e))d(x,y).
\end{equation*}
Therefore, we get
\begin{equation*}
\begin{split}
\|(I+itS_{e})(f)\|_{L}
&\le  \|(I+it\Tb)(f \otimes e)\|_{\infty}+L((I+it\Tb)(f \otimes e))\\
&\le \|I+it\Tb\|\|f\otimes e\|_{L}=\|I+it\Tb\|\|f\|_{L}
\end{split}
\end{equation*}
for any $f \in \lx$. We conclude that
\begin{equation}\label{0.7}
\|I+itS_{e}\| \le \|I+it\Tb\|.
\end{equation}
Since  $\Tb$ is a hermitian operator on $\lxe$, we have 
$\|I+it\Tb\|=1+o(t)$
by \cite[Theorem 5.2.6]{FJB}. By (\ref{0.6}) and (\ref{0.7}), we see that
\[
1 \le \|I+itS_{e}\| \le \|I+it\Tb\|=1+o(t).
\]
This implies that $S_{e}: \lx \to \lx$ is a hermitian operator. By \cite[Theorem 3.1.]{BJJMs} we have $S_e$ is a real multiple of the identity. Since $S_{e}(1)(x)=[\Tb(1\otimes e)(x), e]=0$, we deduce
$S_{e}(f)(x)=0f(x)=0$  for any $f \in \lx$ and $x \in X$.
This implies that
$[\Tb(f \otimes e)(x), e]_{E}=0$
for all $f \in \lx$ and $x \in X$. As $e \in \mathbb{S}(E)$ is arbitrary, we obtain
\begin{equation}\label{0.8}
[\Tb(f \otimes e)(x), e]_{E}=0, \quad  e \in E, \quad f \in \lx, \quad x \in X.
\end{equation}
 Let $f \in \lx$ and $x \in X$. Then we define a map $\Sf: E \to E$ by
$\Sf(e)=\Tb(f \otimes e)(x)$
for any $e \in E$. Since $\Tb$ is a bounded linear operator,  $\Sf$ is also a bounded linear operator with $\|\Sf\| \le \|\Tb\|\|f\|_{L}$. 
By (\ref{0.8}) we have
$[\Sf(e),e]_{E}=[\Tb(f \otimes e)(x), e]_{E}=0$
for all $e \in E$. Applying \cite[Theorem 5]{Lu}, we have
$\Tb(f \otimes e)(x)=\Sf(e)=0$ for any $e \in E$.
As $f \in \lx$ and $x \in X$ are arbitrary, we conclude Step 1.\\
\underline{{\bf{Step 2.}} For any $F \in \lxe$, we have $\Tb(F)=0$.}\\
If $F \in \Lip(X) \otimes E$, Step1 yields that  $\Tb(F)=0$ by the linearity of $\Tb$. Thus it suffices to show $\Tb(F)=0$ holds for any $F \in \lxe \setminus \Lip(X) \otimes E$. 
Let $F \in \lxe \setminus \Lip(X) \otimes E$ with $F(x_0)=0$.  For any $e \in \mathbb{S}(E)$, put 
\[
G_e=(\|F\|_{\infty}-|F|)\otimes e +F,
\]
where $|F|(x):=\|F(x)\|$ and $|F| \in \Lip(X)$. 
Then we have 
\[
G_{e}(x_0)=\|F\|_{\infty}e
\]
 and 
\begin{multline*}
\|G_{e}(x)\|=\|(\|F\|_{\infty}-\|F(x)\|)\otimes e +F(x)\| \\ \le \|F\|_{\infty}-\|F(x)\|+\|F(x)\|  = \|F\|_{\infty}
\end{multline*}
for any $x \in X$. Thus we obtain $\|G_e(x_0)\|=\|F\|_{\infty}=\|G_e\|_{\infty}$. As $\beta(\wx \times \es) $ is compact, there are $\xi \in \beta(\wx \times \es) $  and $\alpha \in \mathbb{C}$ with $|\alpha|=1$ such that $\alpha\widetilde{G_e}(\xi)=L(G_e)$. This implies that $(x_0, \xi, \alpha e) \in P_{G_e}$. We choose a choice function $\Psi_{e}:\lxe \to X \times \beta(\wx \times \es) \times \mathbb{B}(E)$ such that $\Psi_e(G_e)=(x_0, \xi, \alpha e)$ and define a semi-inner product $[\cdot,\cdot]_{\Psi_e L}$ in the manner as in (\ref{choice}). Since $T_0: \Lip(X,E) \to \Lip(X,E)$ is a hermitian operator, we get 
\begin{multline*}
\mathbb{R} \ni [T_0(G_e),G_e]_{\Psi_e L}= [T_0(F),G_e]_{\Psi_e L}\\
= [T_0(F)(x_0)+\alpha \widetilde{T_0(F)}(\xi)e,\|F\|_{\infty}e+L(G_e)e]_{E}\\=(e^{*}(T_0(F)(x_0))+\alpha \widetilde{T_0(F)}(\xi))\|G_{e}\|_{L},
\end{multline*}
where $e^{*} \in \es$ with $e^{*}(e)=1$ for any $e \in \mathbb{S}(E)$. We have
\begin{equation}\label{one}
e^{*}(T_0(F)(x_0))+ \alpha \widetilde{T_0(F)}(\xi) \in \mathbb{R}.
\end{equation}
On the other hand, there exists $y_0 \in X$ such that $\|F(y_0)\|=\|F\|_{\infty}\neq 0$ and there is $f_{y_0} \in \mathbb{S}(E)$ such that $F(y_0)=\|F\|_{\infty}f_{y_0}$. 
We get
\[
G_e(y_0)=F(y_0)=\|F\|_{\infty}f_{y_0}.
\]
This implies that $\|G_e(y_0)\|=\|F(y_0)\|=\|F\|_{\infty}=\|G_{e}\|_{\infty}$. We have 
\begin{multline*}
\|\Gamma G_{e}(y_0,\xi,\alpha f_{y_0})\|_{E}=\|G_e(y_0)+\alpha \widetilde{G_e}(\xi)f_{y_0}\|_{E}\\
=\|\|F\|_{\infty}f_{y_0}+L(G_e)f_{y_0}\|_{E}=\|F\|_{\infty}+L(G_e)=\|G_e\|_{L}.
\end{multline*}
Thus we get $(y_0, \xi, \alpha f_{y_0}) \in P_{G_{e}}$. In the same manner,  there is a choice function $\Psi_{f_{y_0}}:\lxe \to X \times \beta(\wx \times \es) \times \mathbb{B}(E)$ such that $\Psi_{f_{y_0}}(G_e)=(y_0, \xi, \alpha f_{y_0}) $ and we can define a semi-inner product $[\cdot,\cdot]_{\Psi_{f_{y_0}}L}$ on $\lxe$.  It follows that 
\begin{multline*}
\mathbb{R} \ni [T_0(G_e),G_e]_{\Psi_{f_{y_0}}L}= [T_0(F),G_e]_{\Psi_{f_{y_0}}L}\\
= [T_0(F)(y_0)+\alpha \widetilde{T_0(F)}(\xi)f_{y_0},\|G_{e}\|_{L}f_{y_0}]_{E}\\=({f_{y_0}}^{*}(T_0(F)(y_0))+\alpha \widetilde{T_0(F)}(\xi))\|G_{e}\|_{L},
\end{multline*}
where ${f_{y_0}}^{*}  \in \es$ with ${f_{y_0}}^{*} (f_{y_0})=1$. We obtain
\begin{equation}\label{two}
{f_{y_0}}^{*}(T_0(F)(y_0))+\alpha \widetilde{T_0(F)}(\xi) \in \mathbb{R}.
\end{equation}
By (\ref{one}) and (\ref{two}) we get $e^{*}(T_0(F)(x_0))-{f_{y_0}}^{*}(T_0(F)(y_0)) \in \mathbb{R}$. Since $e \in \mathbb{S}(E)$ is arbitrary, it follows that $T_0F(x_0)=0$. Let $F \in \lxe \setminus \Lip(X) \otimes E$ and $x \in X$. We define $F_x=F-1 \otimes F(x)$. Since $F_x(x)=0$, we get 
\[
0=T_0(F_x)(x)=T_0F(x)-T_0(1 \otimes F(x))(x)=T_0(F)(x).
\]
Thus we have  $\Tb(F)=0$ for any $F \in \lxe$ and conclude Step 2.

Therefore we obtain $TF(x)=\phi(F(x))$ for any $F \in \lxe$. This completes the proof.
\end{proof}

\section{An extension of  isometries on $\Lip(X) \otimes E$}\label{33333}
We define the notation of T-sets which is introduced by Myers in \cite{My}.  
\begin{definition}
Let $(A,\|\cdot\|_{A})$ be a semi-normed space. For a subset $\mathbb{U}$ of $A$, we call $\mathbb{U}$ a T-set of $A$ with respect to $\|\cdot\|_{A}$ if $\mathbb{U}$ satisfies the property that for any finite collection $a_1, \cdots, a_n \in \mathbb{U}$, $\|\Sigma^{n}_{i=1}a_i\|_{A}=\Sigma^{n}_{i=1}\|a_i\|_{A}$ and such that $\mathbb{U}$ is a maximal with respect to the property. If no confusion is possible, we will refer to T-set of $A$  with respect to $\|\cdot\|_{A}$ as T-set of $A$.
\end{definition}

\begin{lemma}\label{mreal}
Let $(A,\|\cdot\|_{A})$ be a Banach space and $\mathbb{U}$ a T-set of $A$ with respect to $\|\cdot\|_{A}$. If $a \in \mathbb{U}$ then $\lambda a \in \mathbb{U}$ for any $\lambda \ge 0$.
\end{lemma}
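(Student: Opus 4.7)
The plan is to exploit maximality: to show $\lambda a \in \mathbb{U}$, I will check that the enlarged set $\mathbb{U} \cup \{\lambda a\}$ still satisfies the additivity condition in the definition of T-set, whence maximality will force $\lambda a \in \mathbb{U}$. The case $\lambda = 0$ is immediate from $\|0 + \sum_i a_i\|_A = \sum_i \|a_i\|_A$. So fix $\lambda > 0$; because a finite collection drawn from $\mathbb{U} \cup \{\lambda a\}$ may contain $k \ge 0$ repeated copies of $\lambda a$ together with elements $a_1, \ldots, a_n \in \mathbb{U}$, the problem reduces to establishing the key identity
\begin{equation*}
\Bigl\| \mu a + \sum_{i=1}^n a_i \Bigr\|_A = \mu \|a\|_A + \sum_{i=1}^n \|a_i\|_A \qquad (\star)
\end{equation*}
for every real $\mu \ge 0$ and every $a_1, \ldots, a_n \in \mathbb{U}$, the relevant value being $\mu = k\lambda$.

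For rational $\mu = p/q$ with integers $p \ge 0$, $q \ge 1$, I would use norm homogeneity together with the T-set property of $\mathbb{U}$ applied to the collection consisting of $p$ copies of $a$ and $q$ copies of each $a_i$ (all of which lie in $\mathbb{U}$) to obtain
\begin{equation*}
q \Bigl\| \mu a + \sum_{i=1}^n a_i \Bigr\|_A = \Bigl\| p a + q \sum_{i=1}^n a_i \Bigr\|_A = p \|a\|_A + q \sum_{i=1}^n \|a_i\|_A,
\end{equation*}
and dividing by $q$ recovers $(\star)$. An arbitrary real $\mu \ge 0$ is then handled by choosing nonnegative rationals $\mu_k \to \mu$ and passing to the limit in $(\star)$ using continuity of the norm on both sides.

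I do not anticipate a serious obstacle here; the only point requiring care is that the definition of T-set naturally permits repeated elements in a finite collection, so that adjoining a single new element $\lambda a$ forces us to control arbitrarily many copies of it — and this is precisely what the uniform quantification over $\mu$ in $(\star)$ accomplishes, after which maximality of $\mathbb{U}$ closes the argument.
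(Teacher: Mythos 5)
Your proof is correct, but it takes a genuinely different route from the paper. The paper disposes of the lemma in one line ``by the Hahn--Banach theorem'': the standard argument realizes a maximal set with the additivity property as the norm-attainment set $\{a \in A : \tau(a)=\|a\|_A\}$ of a single norm-one functional $\tau$ (one gets $\tau$ on each finite subcollection from Hahn--Banach applied to the sum, and then a single $\tau$ for all of $\mathbb{U}$ by weak-$*$ compactness and maximality), after which closure under multiplication by $\lambda\ge 0$ is immediate. You instead work directly from the definition: the duplication trick ($p$ copies of $a$, $q$ copies of each $a_i$) plus homogeneity gives the identity $(\star)$ for rational $\mu\ge 0$, continuity of the norm extends it to all real $\mu\ge 0$, and maximality of $\mathbb{U}$ applied to $\mathbb{U}\cup\{\lambda a\}$ closes the argument. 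Your version is more elementary and self-contained --- it uses no duality and no compactness, only homogeneity, continuity of the norm, and maximality --- and you are right to flag that the definition must permit repeated elements in a finite collection for the duplication step (this is the standard reading of Myers' definition and is what the paper intends). What the Hahn--Banach route buys is the stronger structural fact that every T-set is supported by a single functional, which is more information than positive homogeneity alone; what your route buys is a proof that works verbatim in any normed space without invoking the dual. Both are valid proofs of the stated lemma.
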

\begin{proof}
We conclude this Lemma by the Hahn$-$Banach theorem immediately.
\end{proof}

\begin{lemma}\label{isop}
Let $N_i$ be normed spaces for $i=1,2$. Suppose that $U:N_1 \to N_2$ is a surjective isometry with $U(0)=0$. Then $U$ maps T-set of $N_1$ to T-set of  $N_2$.
\end{lemma}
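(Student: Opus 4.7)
The plan is to reduce everything to additivity of $U$. By the Mazur--Ulam theorem, a surjective isometry between (real or complex) normed spaces that fixes the origin is real-linear, and in particular additive. This additivity is exactly what the T-set condition needs, because the defining property $\|\sum_{i=1}^{n} a_i\|=\sum_{i=1}^{n}\|a_i\|$ only involves finite sums, not scalar multiples.

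Let $\mathbb{U}$ be a T-set of $N_1$. First I would verify that $U(\mathbb{U})$ has the norm-additivity property: given $b_1,\ldots,b_n \in U(\mathbb{U})$, write $b_i=U(a_i)$ with $a_i\in\mathbb{U}$, and compute
\[
\bigl\|\Sigma^{n}_{i=1} b_i\bigr\| = \bigl\|U\bigl(\Sigma^{n}_{i=1} a_i\bigr)\bigr\| = \bigl\|\Sigma^{n}_{i=1} a_i\bigr\| = \Sigma^{n}_{i=1}\|a_i\| = \Sigma^{n}_{i=1}\|b_i\|,
\]
using additivity of $U$ in the first equality, the isometry property in the second and fourth, and the T-set property of $\mathbb{U}$ in the third.

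Next I would establish maximality. Suppose $V \subseteq N_2$ is a set containing $U(\mathbb{U})$ with the norm-additivity property. Since $U^{-1}$ is also a surjective isometry fixing $0$, the same calculation applied to $U^{-1}$ shows that $U^{-1}(V)$ has the norm-additivity property in $N_1$. But $U^{-1}(V) \supseteq U^{-1}(U(\mathbb{U})) = \mathbb{U}$, so by maximality of $\mathbb{U}$ we conclude $U^{-1}(V)=\mathbb{U}$, hence $V=U(\mathbb{U})$. This proves $U(\mathbb{U})$ is a T-set of $N_2$.

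The only potential obstacle is justifying additivity of $U$, but this is immediate from Mazur--Ulam since $U$ is a surjective isometry with $U(0)=0$; no further structure on $N_1,N_2$ is needed. Everything else is purely formal bookkeeping with the isometric identity $\|U(\cdot)\|=\|\cdot\|$.
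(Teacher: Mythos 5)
Your proof is correct and follows exactly the route the paper takes: invoke Mazur--Ulam to get additivity of $U$ from $U(0)=0$, then combine the isometry identity with maximality and surjectivity (via $U^{-1}$) to conclude that $U(\mathbb{U})$ is again a T-set. The paper compresses all of this into two sentences, so your version is simply the same argument with the norm-additivity computation and the maximality step written out explicitly.
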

\begin{proof}
It follows from the Mazur$-$Ulam theorem that every surjective isometry $U$ between two normed spaces with $U(0)=0$ is a real linear isometry. By the maximality of T-set and surjectivity of $U$, we conclude that $U$ preserves a T-set.
\end{proof}

Let $(E, \|\cdot\|_{E})$ be a Banach space. Let $x \in X$, $\mathbb{U}$ be  a T-set of $E$ with respect to $\|\cdot\|_{E}$ and $\mathbb{T}$ be  a T-set of $\Lip(X,E)$ with respect to $L(\cdot)$. We write 
\[
S(x,\mathbb{U},\mathbb{T})=\{F \in \Lip(X,E)|F(x)\in \mathbb{U}, \|F(x)\|_{E}=\|F\|_{\infty},F \in \mathbb{T}\}.
\]

\begin{lemma}\label{3.4m}
 Let $x \in X$, $\mathbb{U}$ be a T-set of $E$ and $\mathbb{T}$ be a T-set of $\Lip(X,E)$ with respect to $L(\cdot)$.  Then for any finite collection $F_1,\cdots,F_n \in S(x,\mathbb{U},\mathbb{T})$, we have $\|\Sig F_i\|_{L}=\Sig\|F_i\|_{L}$.
\end{lemma}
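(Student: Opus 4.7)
The plan is to decompose the sum norm $\|\cdot\|_L = \|\cdot\|_\infty + L(\cdot)$ and handle each piece using the respective T-set hypothesis. Since $L(\cdot)$ is a seminorm, the assumption $F_i \in \mathbb{T}$ (a T-set with respect to $L(\cdot)$) immediately gives
\[
L\Bigl(\sum_{i=1}^{n} F_i\Bigr) = \sum_{i=1}^{n} L(F_i).
\]
So the entire task reduces to proving the analogous additivity for the supremum norm.

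For the supremum part, the triangle inequality gives $\|\sum_i F_i\|_\infty \le \sum_i \|F_i\|_\infty$ for free. The reverse inequality is where the remaining hypotheses enter. The key observation is that every $F_i \in S(x,\mathbb{U},\mathbb{T})$ attains its supremum norm at the \emph{same} point $x$, and moreover the values $F_i(x)$ all lie in the T-set $\mathbb{U}$ of $E$ with respect to $\|\cdot\|_E$. Therefore, evaluating at $x$ and applying the defining property of the T-set $\mathbb{U}$,
\[
\Bigl\|\sum_{i=1}^{n} F_i\Bigr\|_\infty \ge \Bigl\|\sum_{i=1}^{n} F_i(x)\Bigr\|_E = \sum_{i=1}^{n} \|F_i(x)\|_E = \sum_{i=1}^{n} \|F_i\|_\infty,
\]
where the last equality uses $\|F_i(x)\|_E = \|F_i\|_\infty$ from the definition of $S(x,\mathbb{U},\mathbb{T})$.

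Combining the two additivity statements and unfolding $\|\cdot\|_L$ finishes the argument:
\[
\Bigl\|\sum_{i=1}^{n} F_i\Bigr\|_L = \Bigl\|\sum_{i=1}^{n} F_i\Bigr\|_\infty + L\Bigl(\sum_{i=1}^{n} F_i\Bigr) = \sum_{i=1}^{n} \|F_i\|_\infty + \sum_{i=1}^{n} L(F_i) = \sum_{i=1}^{n} \|F_i\|_L.
\]
There is no real obstacle here; the lemma is a direct bookkeeping consequence of the two T-set conditions baked into the definition of $S(x,\mathbb{U},\mathbb{T})$, together with the fact that all members of $S(x,\mathbb{U},\mathbb{T})$ conveniently share a common norming point $x$. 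The only thing to be careful about is to apply the T-set property of $\mathbb{U}$ to the specific vectors $F_i(x) \in \mathbb{U}$, which is immediate from the definition.
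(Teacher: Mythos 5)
Your argument is correct and is essentially the same as the paper's: both proofs obtain $L\bigl(\sum_i F_i\bigr)=\sum_i L(F_i)$ directly from the T-set $\mathbb{T}$, and both recover $\bigl\|\sum_i F_i\bigr\|_\infty=\sum_i\|F_i\|_\infty$ by evaluating at the common norming point $x$ and invoking the T-set property of $\mathbb{U}$ for the vectors $F_i(x)$. No gaps.
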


\begin{proof}
For any $F_1,\cdots,F_n \in S(x,\mathbb{U},\mathbb{T})$, we have $F_i(x) \in \mathbb{U}$ and $\|F_{i}(x)\|_{E}=\|F_i\|_{\infty}$ for any $i=1,\cdots,n$. We get 
\[
\|\Sig F_i\|_{\infty} \le \Sig \|F_i\|_{\infty} =\Sig \|F_i(x)\|=\|\Sig F_i(x)\| \le \|\Sig F_i\|_{\infty}.
\]
This implies that 
$\|\Sig F_i\|_{\infty}=\Sig\|F_i\|_{\infty}$.
Since $F_i \in \mathbb{T}$ for any $i=1,\cdots,n$, we also get
$L(\Sig F_i)=\Sig L(F_i)$.
This implies that  
$\|\Sig F_i\|_{L}=\Sig\|F_i\|_{L}$.
\end{proof}

\begin{prop}\label{3.1}
Let $\mathcal{S}$ be a T-set of $\lxe$ with respect to $\|\cdot\|_{L}$. Then there is $x \in X$ and there are $\mathbb{U}$ and  $\mathbb{T}$, where $\mathbb{U}$ is a T-set of $E$ and $\mathbb{T}$ is a T-set of $\lxe$ with respect to $L(\cdot)$, such that $\mathcal{S}=S(x,\mathbb{U},\mathbb{T})$.
\end{prop}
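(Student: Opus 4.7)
The plan is to exploit the decomposition $\|\cdot\|_{L}=\|\cdot\|_{\infty}+L(\cdot)$ into two seminorms and show that the T-property for $\|\cdot\|_{L}$ splits into the T-property for each summand. Concretely, if $F_{1},\dots,F_{n}\in\mathcal{S}$ then
\[
\|\Sig F_{i}\|_{\infty}+L(\Sig F_{i})=\|\Sig F_{i}\|_{L}=\Sig\|F_{i}\|_{L}=\Sig\|F_{i}\|_{\infty}+\Sig L(F_{i}),
\]
and since each of $\|\cdot\|_{\infty}$ and $L(\cdot)$ satisfies the triangle inequality, both of the inequalities $\|\Sig F_{i}\|_{\infty}\le\Sig\|F_{i}\|_{\infty}$ and $L(\Sig F_{i})\le\Sig L(F_{i})$ must in fact be equalities on $\mathcal{S}$. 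So $\mathcal{S}$ has the T-property for $\|\cdot\|_{\infty}$ and for $L(\cdot)$ simultaneously (though not a priori the maximality for either).

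The second step is to produce a single point $x\in X$ at which every element of $\mathcal{S}$ attains its supremum norm. For $F\in\mathcal{S}$ set $K_{F}=\{x\in X:\|F(x)\|_{E}=\|F\|_{\infty}\}$; this is a closed subset of the compact space $X$. For a finite subfamily $F_{1},\dots,F_{n}\in\mathcal{S}$, pick $x_{0}\in X$ with $\|\Sig F_{i}(x_{0})\|_{E}=\|\Sig F_{i}\|_{\infty}=\Sig\|F_{i}\|_{\infty}$; then the chain $\|\Sig F_{i}(x_{0})\|_{E}\le\Sig\|F_{i}(x_{0})\|_{E}\le\Sig\|F_{i}\|_{\infty}$ forces $x_{0}\in K_{F_{1}}\cap\cdots\cap K_{F_{n}}$. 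Thus $\{K_{F}\}_{F\in\mathcal{S}}$ has the finite intersection property, and by compactness I choose $x\in\bigcap_{F\in\mathcal{S}}K_{F}$.

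The third step is a Zorn's lemma argument to produce the T-sets $\mathbb{U}$ and $\mathbb{T}$. The collection $\{F(x):F\in\mathcal{S}\}\subset E$ has the T-property in $E$: for finite $F_{1},\dots,F_{n}\in\mathcal{S}$ I have $\|\Sig F_{i}(x)\|_{E}=\|\Sig F_{i}\|_{\infty}=\Sig\|F_{i}\|_{\infty}=\Sig\|F_{i}(x)\|_{E}$ by the previous step, so Zorn's lemma provides a maximal enlargement $\mathbb{U}$ which is a T-set of $E$ with respect to $\|\cdot\|_{E}$. Analogously $\mathcal{S}$ itself has the T-property for $L(\cdot)$ on $\lxe$, so Zorn's lemma gives a T-set $\mathbb{T}$ of $\lxe$ with respect to $L(\cdot)$ that contains $\mathcal{S}$.

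Finally, by construction every $F\in\mathcal{S}$ satisfies $F(x)\in\mathbb{U}$, $\|F(x)\|_{E}=\|F\|_{\infty}$, and $F\in\mathbb{T}$, so $\mathcal{S}\subseteq S(x,\mathbb{U},\mathbb{T})$. By Lemma \ref{3.4m} the set $S(x,\mathbb{U},\mathbb{T})$ itself has the T-property for $\|\cdot\|_{L}$, and since $\mathcal{S}$ is assumed maximal among such sets the inclusion is an equality, giving $\mathcal{S}=S(x,\mathbb{U},\mathbb{T})$. The only step that requires real care is the compactness/finite intersection argument extracting the common maximizing point $x$; once that is in hand the rest is bookkeeping with Zorn's lemma and maximality of $\mathcal{S}$.
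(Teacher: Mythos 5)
Your proof is correct and follows essentially the same route as the paper's: splitting the additivity of $\|\cdot\|_{L}$ into additivity of $\|\cdot\|_{\infty}$ and $L(\cdot)$, extracting a common norm-attaining point via compactness and the finite intersection property of the sets $P(F)$, enlarging $\{F(x):F\in\mathcal{S}\}$ and $\mathcal{S}$ to T-sets $\mathbb{U}$ and $\mathbb{T}$, and concluding by Lemma \ref{3.4m} together with maximality of $\mathcal{S}$. No gaps.
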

\begin{proof}
For any $F \in \mathcal{S}$ we write $P(F):=\{x \in X| \|F(x)\|_{E}=\|F\|_{\infty}\}$. We shall show that $\bigcap_{F \in \mathcal{S}}P(F)\neq \emptyset$. For any finite collection $F_1,\cdots,F_n \in \mathcal{S}$, since $\|\Sig F_i\|_{L}=\Sig\|F_i\|_{L}$ we have $\|\Sig F_i\|_{\infty}=\Sig\|F_i\|_{\infty}$. Since $\Sig F_i \in \lxe$, there is $x \in X$ such that  $\|(\Sig F_i)(x)\|=\|\Sig F_i\|_{\infty}$. Thus we get 
\[
\Sig \|F_i\|_{\infty}=\|\Sig F_i\|_{\infty}=\|(\Sig F_i)(x)\| \le \Sig\|F_i(x)\|.
\]
This implies that $\|F_i\|_{\infty}=\|F_i(x)\|$ for any $i=1,\cdots,n$ and $x \in \bigcap^{n}_{i=1}P(F_i)$. Since $X$ is compact and $P(F)$ is a closed set for each $F \in \mathcal{S}$, we have $\bigcap_{F \in \mathcal{S}}P(F) \neq \emptyset$ by the finite intersection property.

Let $x \in \bigcap_{F \in \mathcal{S}}P(F) $. We consider the set $R_{x}(\mathcal{S}):=\{F(x) \in E| F \in \mathcal{S}\}$. Choose any finite collection $F_1(x),\cdots,F_n(x) \in R_{x}(\mathcal{S})$. Since $\Sig F_i \in \mathcal{S}$, we have $x \in P(\Sig F_i )$. This implies
\[
\Sig \|F_i\|_{\infty}=\|\Sig F_i\|_{\infty}=\|\Sig F_i(x)\| \le \Sig\|F_i(x)\|=\Sig \|F_i\|_{\infty}.
\]
Thus we have $\|\Sig F_i(x)\| = \Sig\|F_i(x)\|$, which means that there is a T-set $\mathbb{U}$ of $E$ such that $R_{x}(\mathcal{S}) \subset \mathbb{U}$. Therefore for any $F \in \mathcal{S}$ we have $F(x) \in \mathbb{U}$ and  $\|F(x)\|=\|F\|_{\infty}$. 

Since  $L(\Sig F_i)=\Sig L(F_i)$ for any finite collection $F_1,\cdots,F_n \in \mathcal{S}$, there exists a T-set $\mathbb{T}$ of $\lxe$ with respect to $L(\cdot)$ such that $\mathcal{S} \subset \mathbb{T}$. This implies that $\mathcal{S} \subset S(x,\mathbb{U},\mathbb{T})$. By Lemma \ref{3.4m} and maximality of $\mathcal{S}$, we conclude that $\mathcal{S}=S(x,\mathbb{U},\mathbb{T})$.
\end{proof}

\begin{prop}\label{3.6}
Let $X_i$ be a compact metric space and $E_i$ be a Banach space for $i=1,2$. Let $U:\Lip(X_1,E_1) \to \Lip(X_2,E_2)$ be a surjective complex linear isometry. Suppose that there is a surjective complex linear isometry $\psi:E_1\to E_2$ and there is a surjective isometry $\varphi:X_2 \to X_1$ such that 
$U(f \otimes e)(y)=\psi(f(\varphi(y))e)$
for any $f \in \Lip(X_1)$ and $e \in E_1$. Then 
\[
UF(y)=\psi(F(\varphi(y)))
\]
 for any $F \in \Lip(X_1,E_1)$.
\end{prop}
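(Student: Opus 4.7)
The strategy is to introduce an explicit candidate $\widetilde{U}$ for $U$ and then use the T-set machinery of this section to establish $U=\widetilde{U}$ on all of $\Lip(X_1,E_1)$. First I would define $\widetilde{U}:\Lip(X_1,E_1)\to\Lip(X_2,E_2)$ by $\widetilde{U}F(y):=\psi(F(\varphi(y)))$. A direct check, using that $\psi$ is a surjective complex linear isometry and $\varphi$ is a surjective isometry, shows that $\widetilde{U}$ is a surjective complex linear isometry; by hypothesis $\widetilde{U}$ agrees with $U$ on $\Lip(X_1)\otimes E_1$. Setting $V:=\widetilde{U}^{-1}\circ U$, the proposition reduces to the claim that every surjective complex linear isometry $V$ on $\Lip(X_1,E_1)$ that restricts to the identity on $\Lip(X_1)\otimes E_1$ must satisfy $V=\id$.

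Fix $F\in\Lip(X_1,E_1)$ and $x_0\in X_1$; the goal is $VF(x_0)=F(x_0)$. Since $V$ fixes the constant tensor $1\otimes F(x_0)$, I may replace $F$ by $F-1\otimes F(x_0)$, reducing to the case $F(x_0)=0$ with goal $VF(x_0)=0$. For each $e\in\mathbb{S}(E_1)$, in the spirit of Step~2 in the proof of Theorem~\ref{hermitian}, I would introduce
\[
G_e:=(\|F\|_\infty-|F|)\otimes e+F,
\]
where $|F|(x):=\|F(x)\|_{E_1}$ is Lipschitz. A short computation yields $G_e(x_0)=\|F\|_\infty e$ with $\|G_e\|_\infty=\|G_e(x_0)\|_{E_1}=\|F\|_\infty$; since $V$ fixes the tensor $(\|F\|_\infty-|F|)\otimes e$, one has $VG_e(x_0)=\|F\|_\infty e+VF(x_0)$.

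Next I invoke Proposition~\ref{3.1} and Lemma~\ref{isop}. By Zorn's lemma I pick a T-set $\mathcal{S}_e$ of $(\Lip(X_1,E_1),\|\cdot\|_L)$ containing $G_e$ and, using Proposition~\ref{3.1} together with the identity $\|G_e(x_0)\|_{E_1}=\|G_e\|_\infty$, arrange $\mathcal{S}_e=S(x_0,\mathbb{U}_e,\mathbb{T}_e)$ for suitable T-sets $\mathbb{U}_e$ and $\mathbb{T}_e$. By Lemma~\ref{isop}, $V(\mathcal{S}_e)$ is again a T-set, so by Proposition~\ref{3.1} it has the form $S(x'_e,\mathbb{U}'_e,\mathbb{T}'_e)$. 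Any tensor lying in $\mathcal{S}_e$ is fixed by $V$ and hence lies in $V(\mathcal{S}_e)$; by arranging $\mathcal{S}_e$ to contain a tensor $g\otimes v$ whose scalar factor $g$ has a unique norm-maximizer at $x_0$, I force $x'_e=x_0$, yielding $\|VG_e\|_\infty=\|VG_e(x_0)\|_{E_1}$ and $VG_e(x_0)\in\mathbb{U}'_e$. Substituting $VG_e(x_0)=\|F\|_\infty e+VF(x_0)$ and letting $e$ vary over $\mathbb{S}(E_1)$, I would then combine these T-set constraints (possibly together with an analogous construction centered at a point where $\|F\|$ attains its maximum, echoing the hermitian argument) to conclude $VF(x_0)=0$.

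The hardest step, I expect, will be producing a tensor $g\otimes v\in\mathcal{S}_e$ whose scalar factor is peaked uniquely at $x_0$. Membership in a T-set with respect to the sum norm $\|\cdot\|_L=\|\cdot\|_\infty+L(\cdot)$ demands simultaneous additivity of both $\|\cdot\|_\infty$ and $L(\cdot)$; while the $\|\cdot\|_\infty$-additivity is easily arranged by aligning the $x_0$-values of $g\otimes v$ and $G_e$ in the direction $e$, the $L$-additivity is subtle and will likely require either a careful matching of the Lipschitz-maximizing pairs of $G_e$ and $g\otimes v$, or a sufficiently flexible application of Zorn's lemma when selecting the $L$-T-set component $\mathbb{T}_e$.
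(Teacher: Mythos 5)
Your skeleton---reduce to $F(x_0)=0$, trap a perturbation of $F$ in a T-set of the form $S(x_0,\mathbb{U},\mathbb{T})$, push it through the isometry via Lemma~\ref{isop} and Proposition~\ref{3.1}, and read off a constraint on the value at the base point---is the same as the paper's, and the preliminary reduction to $V=\widetilde{U}^{-1}\circ U=\id$ is a harmless repackaging. The genuine gap is exactly the step you flag as hardest, and it is not a technicality that can be deferred: to force the base point of the image T-set to be $x_0$ you propose to insert a non-constant peaked tensor $g\otimes v$ into the \emph{same} T-set as $G_e$. Membership in $S(x_0,\mathbb{U}_e,\mathbb{T}_e)$ requires $g\otimes v\in\mathbb{T}_e$, i.e.\ exact additivity $L(G_e+g\otimes v)=L(G_e)+L(g\otimes v)$ (and jointly with every other member of $\mathbb{T}_e$). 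There is no reason a $g$ uniquely peaked at $x_0$ can be chosen to satisfy this: the Lipschitz constant of $g\otimes v$ is realized on pairs near $x_0$ where $g$ must decay, while that of $G_e$ is governed by $F$ and may be realized elsewhere, with an incompatible functional direction. Note also that $G_e=(\|F\|_\infty-|F|)\otimes e+F$ attains its sup norm not only at $x_0$ but at every norm-maximizer of $F$ (where $G_e=F$), so $G_e$ alone does not pin the base point either. The paper's proof of Lemma~\ref{zerotozero} is built precisely to avoid this obstruction: it uses the single function $F+g_n\otimes\et$, where $g_n$ is a peak of height $\|UF\|_{\infty}+1$ at $x_0$, so that this one function has $P(F+g_n\otimes\et)=\{x_0\}$ by itself; the only auxiliary element inserted into its T-set is the constant $1\otimes\et$, for which $L$-additivity is free because $L(1\otimes\et)=0$. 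The price paid is that the image base point $y_n$ is not pinned exactly, and the paper recovers it in the limit: $y_n\to y_0$ follows from a quantitative contradiction using the height $\|UF\|_{\infty}+1$ and inequality \eqref{kaiten}, after which one passes to the limit in the T-set additivity relations.

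A second, lesser gap is the endgame. Even granting $x'_e=x_0$, what you obtain is that $\|F\|_\infty e+VF(x_0)$ and $e$ lie in a common T-set of the target space for each $e\in\mathbb{S}(E_1)$. This does \emph{not} by itself yield $VF(x_0)=0$: for $e=\pm VF(x_0)/\|VF(x_0)\|$ the two vectors are positive multiples of each other and the constraint is vacuous. One must exploit the complex structure, e.g.\ take $e$ to be a non-real unimodular multiple of $VF(x_0)/\|VF(x_0)\|$ and compare the resulting norming-functional identities---this is the $e^{i\theta}$ rotation argument in the paper, which is absent from your sketch. That part is repairable once the base point is controlled; the T-set insertion of a peaked tensor is the real obstruction, and as written the proof does not go through.
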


In the rest of this section, we assume that a surjective complex linear isometry $U:\Lip(X_1,E_1) \to \Lip(X_2,E_2)$ satisfies the assumption of Proposition \ref{3.6}. To prove Proposition \ref{3.6} we first show the following lemma.
\begin{lemma}\label{zerotozero}
Let $x_0 \in X_1$ and $F \in \lxo$ with $\|F\|_{\infty}=1$ and $F(x_0)=0$. Then $UF(y_0)=0$, where $y_0=\varphi^{-1}(x_0)$.
\end{lemma}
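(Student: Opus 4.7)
The plan is to use the T-set machinery from Section~\ref{33333} to transport the special point $x_0$ to $y_0$ through $U$, and thereby reduce the lemma to a Banach-space identity in $E_2$ that forces $UF(y_0) = 0$.

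For each $e \in \mathbb{S}(E_1)$, I introduce the auxiliary function $G_e := (1 - |F|) \otimes e + F \in \Lip(X_1, E_1)$, mimicking Step~2 of the proof of Theorem~\ref{hermitian}. A direct computation shows $G_e(x_0) = e$ and $\|G_e\|_\infty = 1 = \|G_e(x_0)\|_{E_1}$, and since $(1 - |F|) \otimes e$ lies in $\Lip(X_1) \otimes E_1$, the hypothesis on $U$ yields $UG_e(y_0) = \psi(e) + UF(y_0)$. A short verification shows that $G_e$ and $1 \otimes e$ are aligned for $\|\cdot\|_L$: the sup norm adds because both attain their maximum at $x_0$ with common value $e$, and the Lipschitz seminorms add trivially because $1 \otimes e$ is constant. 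Hence $\{G_e, 1 \otimes e\}$ lies in a common T-set of $\Lip(X_1, E_1)$.

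I then extend $\{G_e, 1 \otimes e\}$ to a T-set $\mathcal{T} = S(x_0, \mathbb{U}, \mathbb{T})$ of $\Lip(X_1, E_1)$ supplied by Proposition~\ref{3.1}, and enlarge it so as to contain a peaking elementary tensor $f \otimes e$, where $f \in \Lip(X_1)$ is nonnegative with $f(x_0) = 1 = \|f\|_\infty$ and $f(x) < 1$ for $x \neq x_0$. By Lemma~\ref{isop}, $U\mathcal{T}$ is a T-set of $\Lip(X_2, E_2)$, and Proposition~\ref{3.1} decomposes it as $S(y_*, \mathbb{V}, \mathbb{T}')$. Since the hypothesis gives $U(f \otimes e)(y) = f(\varphi(y)) \psi(e)$, which attains its sup norm uniquely at $y_0 = \varphi^{-1}(x_0)$, the constraint $y_* \in P(U(f \otimes e))$ forces $y_* = y_0$. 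Consequently $\psi(e) = U(1 \otimes e)(y_0)$ and $UG_e(y_0) = \psi(e) + UF(y_0)$ both lie in the T-set $\mathbb{V} \subset E_2$.

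Setting $a := UF(y_0)$ and $v := \psi(e)$, the T-set structure on $\mathbb{V}$ gives the identity $\|v + ta\| = 1 + t(\|v + a\| - 1)$ for every $t \in [0, 1]$, so that the convex function $t \mapsto \|v + ta\|$ is forced to be affine on $[0, 1]$ for every unit vector $v \in \mathbb{S}(E_2)$ (using that $\psi$ maps $\mathbb{S}(E_1)$ onto $\mathbb{S}(E_2)$). A standard geometric argument then yields $a = 0$: in a strictly convex Banach space, affinity of $t \mapsto \|v + ta\|$ on a nondegenerate interval forces $v$ to be positively collinear with $a$, which cannot happen for every $v \in \mathbb{S}(E_2)$ unless $a = 0$; in the general case one exploits Birkhoff--James orthogonality of a suitable unit vector to $a$. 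The main obstacle is the construction of the T-set $\mathcal{T}$ containing both $G_e$ and the peaking tensor $f \otimes e$: inclusion of $f \otimes e$ requires that it be aligned with $G_e$ in the Lipschitz seminorm, which is not automatic and demands a delicate choice of the auxiliary T-set $\mathbb{T}$ of $\Lip(X_1, E_1)$ with respect to $L(\cdot)$, or else an approximation by a sequence of peaking functions whose Lipschitz constants flatten out.
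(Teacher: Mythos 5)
Your overall strategy --- aligning auxiliary functions with $UF$ inside a common T-set, pushing the alignment through $U$ via Lemma \ref{isop} and Proposition \ref{3.1}, and reading off a norm identity in $E_2$ --- is the same as the paper's, but the proposal has a genuine gap exactly where you flag one, and the route you sketch for closing it would not work. The problem is locating the base point of the T-set. Your $G_e=(1-|F|)\otimes e+F$ satisfies $\|G_e(x)\|\le 1$ everywhere, but equality also holds at every point where $\|F(x)\|=1$ (there $G_e(x)=F(x)$, and such points exist since $\|F\|_\infty=1$), so $P(G_e)$ is strictly larger than $\{x_0\}$; Proposition \ref{3.1} then only yields $S(x,\mathbb{U},\mathbb{T})$ for \emph{some} $x$ in the intersection of the peak sets, not for $x_0$, and nothing pins the image T-set at $y_0$ either. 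Forcing the base point by adjoining a peaking tensor $f\otimes e$ would require $L(f\otimes e+G_e)=L(f\otimes e)+L(G_e)$, which, as you concede, there is no reason to expect. The paper avoids this entirely: it works with the single function $F+g_n\otimes e_\theta$, where $g_n$ is a bump of amplitude $\|UF\|_\infty+1>\|F\|_\infty$ supported near $x_0$, so that the peak set is exactly $\{x_0\}$; it then argues by contradiction (assuming $UF(y_0)\neq 0$), uses the resulting strict inequality $\|UF(y_0)+(\|UF\|_\infty+1)\psi(e_\theta)\|>\|UF\|_\infty+1$ to show that the base points $y_n$ of the image T-sets satisfy $d(y_n,y_0)<1/(nL(F))$, and passes to the limit. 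No two functions ever need to be aligned in the $L(\cdot)$-seminorm.

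The endgame is also not right as stated. $E_2$ is an arbitrary complex Banach space, so you cannot invoke strict convexity, and the Birkhoff--James remark is not an argument. Moreover the condition you extract --- that $t\mapsto\|v+ta\|$ is affine on $[0,1]$ for every $v\in\mathbb{S}(E_2)$ --- only forces $a=0$ if you exploit the \emph{complex} structure: for $v=\pm a/\|a\|$ with $\|a\|\le 1$ the map is affine for any $a$, and it is directions such as $v=ia/\|a\|$, where $\|v+ta\|=\sqrt{1+t^2\|a\|^2}$, that give a contradiction. This complex rotation is precisely the paper's mechanism: it sets $e_\theta=\psi^{-1}(e^{i\theta}a)$ with $\theta\neq 0$ small, obtains $\|c_0+e^{i\theta}a\|=\|c_0\|+1$, produces a norming functional $\tau$ with $\tau(a)=1$ and $\tau(e^{-i\theta}c_0)=\|c_0\|$, and concludes $\|UF(y_0)\|=e^{i\theta}\bigl(\|c_0\|-(\|UF\|_\infty+1)\beta\bigr)$, which can be real only if it vanishes. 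If you keep your direct formulation you must make this rotation explicit; as written the final step does not go through.
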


\begin{proof}
Suppose that $UF(y_0)\neq 0$. Put $a=\frac{UF(y_0)}{\|UF(y_0)\|}$. 
The map from $\mathbb{S}(E_{2})$ to $\mathbb{R}$ defined by $e \mapsto \|UF(y_0)+(\|UF\|_{\infty}+1)e\|$ is continuous.
 Since $\|UF(y_0)\| \neq 0$, we have 
\begin{multline*}
\|UF(y_0)+(\|UF\|_{\infty}+1)a\|=\|\frac{UF(y_0)}{\|UF(y_0)\|}(\|UF(y_0))\|+\|UF\|_{\infty}+1)\|\\= \|UF(y_0))\|+\|UF\|_{\infty}+1 >\|UF\|_{\infty}+1.
\end{multline*}
 There exists $\delta >0$ such that if $e \in \mathbb{S}(E_2)$ with $\|a-e\|<\delta$ then  $\|UF(y_0)+(\|UF\|_{\infty}+1)e\|>\|UF\|_{\infty}+1$. 
We choose $\theta \in (0,2\pi)$ such that $|e^{i\theta}-1|<\delta$. We write $\et:=\psi^{-1}(e^{i\theta}a)$. This implies that 
\begin{equation}\label{kaiten}
\|UF(y_0)+(\|UF\|_{\infty}+1)\psi(\et)\|>\|UF\|_{\infty}+1.
\end{equation}
For any $n \in \mathbb{N}$, we define $g_n \in \Lip(X_1)$ by 
\[
g_n(x)=(\|UF\|_{\infty}+1)\max\{1-nL(F)d(x,x_0),0\}, \quad x \in X_1.
\]
There is $\mathcal{S}_n$ which is a T-set  of $\lxo$ with respect to $\|\cdot\|_{L}$ such that $F+g_n\otimes \et \in \mathcal{S}_n$. Since we have 
\[
(F+g_n\otimes \et)(x_0)=(\|UF\|_{\infty}+1)\et.
\]
When $x \neq x_0$ and  $1-nL(F)d(x,x_0)\ge 0$, we have 
\begin{multline*}
\|(F+g_n\otimes \et)(x)\|=\|F(x)-F(x_0)+(g_n\otimes \et)(x)\|\\ \le L(F)d(x,x_0)+(\|UF\|_{\infty}+1)(1-nL(F)d(x,x_0))\\=(1-n(\|UF\|_{\infty}+1)) L(F)d(x,x_0)+\|UF\|_{\infty}+1< \|UF\|_{\infty}+1.
\end{multline*}
When $1-nL(F)d(x,x_0)\le 0$, we have 
\[
\|(F+g_n\otimes \et)(x)\|=\|F(x)\|\le 1< \|UF\|_{\infty}+1.
\]
Thus we obtain 
 $P(F+g_n\otimes \et )=\{x_0\}$. 
By Proposition \ref{3.1} there are T-set $\mathbb{U}_n \subset E_1$ and T-set $\mathbb{T}_n \subset \lxo$ such that $ F+g_n\otimes \et \in \mathcal{S}_n=S(x_0,\mathbb{U}_n,\mathbb{T}_n)$. In particular we have
\[
(\|UF\|_{\infty}+1)\et=(F+g_n\otimes \et)(x_0)  \in \mathbb{U}_n.
\]
By Lemma \ref{mreal}, $\et \in \mathbb{U}_n$. 
Since $U$ is a surjective isometry, Lemma \ref{isop} shows that there are $y_n \in X_2$, T-set $\mathbb{V}_n \subset E_2$ and T-set $\mathbb{T'}_n \subset \lxt$ with respect to $L(\cdot)$ such that $U(S(x_0,\mathbb{U}_n,\mathbb{T}_n))=S(y_n,\mathbb{V}_n,\mathbb{T'}_n)$. Since $\et \in \mathbb{U}_n$, we have $1 \otimes \et \in S(x_0,\mathbb{U}_n,\mathbb{T}_n) $. By the assumption, 
 we have $U(1 \otimes \et)=1 \otimes \psi(\et) \in S(y_n,\mathbb{V}_n,\mathbb{T'}_n)$. It implies that $\psi(\et) \in \mathbb{V}_n$ for any $n \in \mathbb{N}$.
For any $y \in X_2$
\begin{multline*}
U(F+g_n \otimes \et)(y)=UF(y)+\psi(g_n(\varphi(y))\et)\\=UF(y)+(\|UF\|_{\infty}+1)\max\{1-nL(F)d(\varphi(y),x_0),0\}\psi(\et).
\end{multline*}
We shall show that the sequence $\{y_n\}$ converges $y_0$ as $n \to \infty$. Suppose that there exists $n \in \mathbb{N}$ such that $1-nL(F)d(\varphi(y_n),x_0) < 0$. Then we have 
\[
\|UF\|_{\infty} \ge \|UF(y_n)\|=\|(UF+U(g_n\otimes \et))(y_n)\|=\|UF+U(g_n\otimes \et)\|_{\infty}
\]
and 
\begin{multline*}
\|UF+U(g_n\otimes \et)\|_{\infty} \ge \|UF(y_0)+(\|UF\|_{\infty}+1)\psi(\et)\|>\|UF\|_{\infty}+1.
\end{multline*}
by (\ref{kaiten}). This is a contradiction. Thus for every $n \in \mathbb{N}$ we have 
\[
1-nL(F)d(\varphi(y_n),x_0) \ge 0.
\]
Thus we get $\frac{1}{nL(F)}>d(\varphi(y_n),x_0)=d(\varphi(y_n),\varphi(y_0))=d(y_n,y_0)$. This implies that $y_n \to y_0$ as $n \to \infty$. Since $UF \in \lxt$ we get $UF(y_n) \to UF(y_0)$. 
 
Because we obtain $0\le 1-nL(F)d(\varphi(y_n),x_0) \le 1$, the sequence $\{ 1-nL(F)d(\varphi(y_n),x_0)\}$ has a convergent subsequence. Without loss of generality we can assume that the sequence converges to $\beta \in [0,1]$ as $n \to \infty$. We write 
\[
c_n:=U(F+g_n\otimes \et)(y_n)=UF(y_n)+(\|UF\|_{\infty}+1)(1-nL(F)d(\varphi(y_n),x_0))\psi(\et)
\]
 and 
\begin{equation}\label{taukabuseru}
c_0:=UF(y_0)+(\|UF\|_{\infty}+1)\beta \psi(\et).
\end{equation}
We obtain that 
\begin{equation}\label{NS}
\|c_n-c_0\|_{E_2} \to 0  \ \quad \text{if} \quad n \to \infty.
\end{equation} 
As $U(F+g_n\otimes \et) \in S(y_n,\mathbb{V}_n,\mathbb{T'}_n)$, we get $c_n \in \mathbb{V}_n$.
Since $\psi(\et) \in \mathbb{V}_n$, we have $\|c_n+\psi(\et)\|=\|c_n\|+\|\psi(\et)\|$. By (\ref{NS}) we get $\|c_0+\psi(\et)\|=\|c_0\|+\|\psi(\et)\|$. As $\psi(\et)=e^{i\theta}a$ we obtain
\[
\|e^{-i\theta}c_0+a\|=\|e^{-i\theta}c_0\|+\|a\|.
\]
Thus there is $\tau \in E_2^{*}$ such that $\|\tau\|=1$, $\tau(e^{-i\theta}c_0)=\|c_0\|$ and $\tau(a)=\|a\|=1$. By (\ref{taukabuseru}) and $a=\frac{UF(y_0)}{\|UF(y_0)\|}$, we have 
\begin{multline*}
e^{i\theta}\|c_0\|=\tau(c_0)=\tau(UF(y_0))+\tau((\|UF\|_{\infty}+1)\beta e^{i\theta}a)\\
=\|UF(y_0)\|+e^{i\theta}(\|UF\|_{\infty}+1)\beta. 
\end{multline*}
We obtain that $\|UF(y_0)\|=e^{i\theta}(\|c_0\|-(\|UF\|_{\infty}+1)\beta)$. As $\theta \in (0,2\pi)$ and $\|c_0\|-(\|UF\|_{\infty}+1)\beta \in \mathbb{R}$ , we conclude $UF(y_0)=0$.
\end{proof}

\begin{proof}[The proof of Proposition \ref{3.6}]
By the assumption, it suffices to show that $UF(y)=\psi(F(\varphi(y)))$ holds for any $F \in \lxo$ which $F$ is not a constant map. For any $x \in X_1$, we define $G:=F-1 \otimes F(x)$. Then, we have $G(x)=0$.  As $G \neq 0$, without loss of generality, we assume that $\|G\|_{\infty}=1$. By Lemma \ref{zerotozero}, we obtain $(UG)(\varphi^{-1}(x))=0$. This implies that $(UF)(\varphi^{-1}(x))=U(1 \otimes F(x))=\psi(1(x)F(x))=\psi(F(x))$. 
\end{proof}


\begin{section}{Proof of Theorem \ref{isometrydd}}\label{44444}
Let $\ma_i$ be unital $C^{*}$-algebras for $i=1,2$. 
In this section we consider unital surjective complex isometries with respect to the norm $\|\cdot\|_L$ from $\lxom$ onto $\lxtm$.
Although we  apply the similar arguments as \cite{hermitian1}, we show a proof without omitting it because this is a generalization for \cite[Theorem 3.3]{hermitian1}. 
We say that a bounded operator $D$ on a unital $C^*$-algebra $\mathcal A$ is a $*$-derivation if 
\begin{equation}\label{star}
\begin{split}
&D(ab)=D(a)b+aD(b),
\\
& D(a^*)=D(a)^*\\
\end{split}
\end{equation}
for every pair $a, b\in \mathcal{A}$. 
By the definition, it is easy to see that $D(1)=1$ for any  $*$-derivation on $\mathcal A$. 
For each $a \in \mathcal{A}$, a left multiplication operator $M_a:\mathcal{A}\to \mathcal{A}$ is defined by $M_ab=ab$ for every $b\in \mathcal{A}$. We denote the set of all hermitian elements of $\ma$ by $\hm$.

The following is the characterization of hermitian operators on a unital $C^*$-algebra.
\begin{theorem}[Sinclair \cite{Si}]\label{Si}
Let $\mathcal{A}$ be a unital $C^*$-algebra. A bounded operator $T$ on $\mathcal{A}$ is a hermitian operator  if and only if there exist  $h\in \hm$ and a $*$-derivation $D$ on $\mathcal{A}$ such that 
$T=M_h+ iD$.
\end{theorem}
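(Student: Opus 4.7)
The plan is to handle the two directions separately, using the characterization ``$T$ hermitian $\iff$ $e^{itT}$ a surjective isometry for every $t\in\mathbb{R}$'' (available via \cite[Theorem 5.2.6]{FJB}) together with Kadison's theorem (stated in the introduction) as the two main black boxes.

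\textbf{Sufficiency.} Suppose $T=M_h+iD$ with $h\in\hm$ and $D$ a $*$-derivation. I would check each summand separately and use that hermitian operators form a real-linear subspace of $B(\mathcal{A})$. Since $h$ is self-adjoint, $e^{ith}$ is unitary, and $e^{itM_h}=M_{e^{ith}}$; the $C^{*}$-identity $\|e^{ith}a\|^{2}=\|a^{*}(e^{ith})^{*}e^{ith}a\|=\|a\|^{2}$ shows $M_h$ is hermitian. For $iD$, iterating the Leibniz rule gives $D^{n}(ab)=\sum_{k}\binom{n}{k}D^{k}(a)D^{n-k}(b)$, so $e^{tD}$ is an algebra automorphism; moreover $D(a^{*})=D(a)^{*}$ propagates to $e^{tD}(a^{*})=e^{tD}(a)^{*}$, so $e^{tD}$ is a $*$-automorphism and $e^{it(iD)}=e^{-tD}$ is isometric.

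\textbf{Necessity.} Given $T$ hermitian, set $h:=T(1)$. For each state $\phi$ on $\mathcal{A}$ one has $\|\phi\|=1=\phi(1)=\|1\|$, so $\phi(T(1))\in\mathbb{R}$ because $T$ is hermitian; since a $C^{*}$-element is self-adjoint iff every state takes real values on it, $h\in\hm$. By sufficiency $M_h$ is hermitian, hence $\widetilde{T}:=T-M_h$ is hermitian with $\widetilde{T}(1)=0$. Consequently $e^{it\widetilde{T}}(1)=1$, so $e^{it\widetilde{T}}$ is a unital surjective linear isometry of $\mathcal{A}$ and, by Kadison's theorem, a Jordan $*$-isomorphism. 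Differentiating the identities
\[
e^{it\widetilde{T}}(a\circ b)=e^{it\widetilde{T}}(a)\circ e^{it\widetilde{T}}(b),\qquad e^{it\widetilde{T}}(a^{*})=e^{it\widetilde{T}}(a)^{*}
\]
at $t=0$ (with $a\circ b=\tfrac{1}{2}(ab+ba)$) yields
\[
\widetilde{T}(a\circ b)=\widetilde{T}(a)\circ b+a\circ \widetilde{T}(b),\qquad \widetilde{T}(a^{*})=-\widetilde{T}(a)^{*}.
\]
Setting $D:=-i\widetilde{T}$ produces a Jordan derivation satisfying $D(a^{*})=D(a)^{*}$, and $T=M_h+iD$.

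\textbf{Main obstacle.} The one remaining step is upgrading the Jordan identity for $D$ to the associative Leibniz rule $D(ab)=D(a)b+aD(b)$. This is the content of the classical theorem of Herstein--Cusack--Bre\v{s}ar that every (continuous) Jordan derivation on a semiprime Banach algebra is an associative derivation; since every $C^{*}$-algebra is semiprime, this applies and completes the proof. This is also essentially the only place beyond Kadison's theorem where the $C^{*}$-structure of $\mathcal{A}$ is used in a non-formal way; without it, one would only obtain a Jordan $*$-derivation, which in general is strictly weaker.
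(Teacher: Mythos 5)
Your proposal is correct, but there is nothing in the paper to compare it against: Theorem \ref{Si} is imported verbatim from Sinclair's paper \cite{Si} and used as a black box (its only role here is to feed into Proposition \ref{0}), so the paper contains no proof of it. Judged on its own merits, your argument is sound and is essentially the classical route to this result. The sufficiency direction works: $e^{itM_h}=M_{e^{ith}}$ is left multiplication by a unitary, the iterated Leibniz rule together with $D^{n}(a^{*})=D^{n}(a)^{*}$ shows $e^{tD}$ is a $*$-automorphism (hence isometric), and the set of hermitian operators is a real-linear subspace of $B(\mathcal{A})$ --- immediate from the semi-inner-product definition used in the paper --- so $M_h+iD$ is hermitian. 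The necessity direction is also correct: states are admissible norming functionals at $1$, so $h=T(1)$ has real values under all states and is self-adjoint; $\widetilde{T}=T-M_h$ is hermitian with $\widetilde{T}(1)=0$, so each $e^{it\widetilde{T}}$ is a unital surjective isometry, hence by Kadison a Jordan $*$-isomorphism, and differentiating at $t=0$ makes $D=-i\widetilde{T}$ a Jordan derivation with $D(a^{*})=D(a)^{*}$. You correctly isolated the one genuinely non-formal step, passing from the Jordan identity to the associative Leibniz rule; invoking Herstein--Cusack--Bre\v{s}ar (with $C^{*}$-algebras semiprime) legitimately closes it, and in fact that step is precisely what Sinclair's own paper establishes (continuous Jordan derivations on semisimple Banach algebras are derivations), which is why the theorem carries his name. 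There is no circularity in your two black boxes, since Kadison's theorem and the Jordan-derivation theorem are both independent of Theorem \ref{Si}.
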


We introduce the notation to characterize hermitian operators on $\lxm$.
\begin{definition}\label{hel}
For any $h \in \hm$, we define a multiplication operator $M_{1\otimes h}: \lxm \to \lxm$ by
\[
M_{1\otimes h}(F)=(1\otimes h)F,  \quad F \in \lxm.
\]
For any $*$-derivation $D: \ma \to \ma$, we define a map $\widehat{D}: \lxm \to \lxm$ by
\[
\widehat{D}(F)(x)=D(F(x)), \quad F \in \lxm ,  \quad x \in X.
\] 
\end{definition}
Combining Theorem \ref{Si} and Theorem \ref{hermitian} we obtain the following.
\begin{prop}\label{0}
Suppose that $T: \lxm \to \lxm$ is a map. Then $T$ is a hermitian operator if and only if there exists $h \in \hm$ and a $*$-derivation $D$ on $\mathcal A$ such that
\begin{equation}\label{hel1}
T=M_{1\otimes h}+i\widehat{D}.
\end{equation}
\end{prop}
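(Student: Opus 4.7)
The plan is to combine Theorem \ref{hermitian} with Sinclair's Theorem \ref{Si} via the pointwise representation; each direction is essentially a transcription once the right theorem is in hand. First I would handle the easier direction: assume $T=M_{1\otimes h}+i\widehat{D}$ for some $h\in\hm$ and some $*$-derivation $D$ on $\mathcal{A}$. Before anything else I would verify well-definedness: $M_{1\otimes h}(F)=(1\otimes h)F$ is a product of two Lipschitz maps into the Banach algebra $\mathcal{A}$ and so lies in $\lxm$, while $\widehat{D}(F)$ satisfies $\|\widehat{D}(F)\|_\infty\le \|D\|\|F\|_\infty$ and $L(\widehat{D}(F))\le \|D\|L(F)$, so it too belongs to $\lxm$. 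Then for each $F\in\lxm$ and $x\in X$,
\[
TF(x)=hF(x)+iD(F(x))=(M_h+iD)(F(x)).
\]
By Sinclair's theorem the operator $\phi:=M_h+iD$ is hermitian on $\mathcal{A}$, so Theorem \ref{hermitian} (applied with $E=\mathcal{A}$) immediately yields that $T$ is hermitian on $\lxm$.

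For the nontrivial direction, assume $T$ is a hermitian operator on $\lxm$. Theorem \ref{hermitian} supplies a hermitian operator $\phi:\mathcal{A}\to\mathcal{A}$ with
\[
TF(x)=\phi(F(x)), \quad F\in\lxm,\ x\in X.
\]
Sinclair's theorem then decomposes $\phi=M_h+iD$ for some $h\in\hm$ and some $*$-derivation $D$ on $\mathcal{A}$. Substituting this into the pointwise formula yields
\[
TF(x)=hF(x)+iD(F(x))=M_{1\otimes h}(F)(x)+i\widehat{D}(F)(x),
\]
so $T=M_{1\otimes h}+i\widehat{D}$ as operators on $\lxm$.

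Since both directions amount to translating between $T$ and its pointwise symbol $\phi$, there is no genuine obstacle beyond correctly invoking the two existing theorems. The only subtle point worth emphasising is the reduction provided by Theorem \ref{hermitian}: it converts the study of hermitian operators on the vector-valued Lipschitz algebra $\lxm$ into the study of hermitian operators on the coefficient $C^*$-algebra $\mathcal{A}$, at which stage Sinclair's classification applies without modification. If uniqueness of the pair $(h,D)$ is desired, it follows from the corresponding uniqueness in Sinclair's theorem together with the fact that $\phi$ is determined by $T$ through evaluation on constant functions, namely $\phi(a)=T(1\otimes a)(x)$ for any $x\in X$.
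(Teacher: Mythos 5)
Your proposal is correct and matches the paper's intent exactly: the paper gives no separate proof of Proposition \ref{0}, stating only that it follows by combining Theorem \ref{hermitian} with Sinclair's Theorem \ref{Si}, which is precisely the translation between $T$ and its pointwise symbol $\phi=M_h+iD$ that you carry out in both directions. Your added remarks on well-definedness and on recovering $\phi$ from $T(1\otimes a)$ are consistent with the paper's setup and introduce no gap.
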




The following Proposition  is a well-known fact. 
\begin{prop}\label{iti}
Let $\mathcal{B}_j$ be Banach algebras for $j=1,2$.  Suppose that $U$ is a surjective complex linear isometry from $\mathcal{B}_1$ onto $\mathcal{B}_2$ and $T$ is a hermitian operator on $\mathcal{B}_1$. Then the map $UTU^{-1}$ is a hermitian operator on $\mathcal{B}_2$.
\end{prop}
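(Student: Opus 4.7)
The plan is to invoke the characterization of hermitian operators already used in the proof of Theorem \ref{hermitian}: a bounded operator $S$ on a complex Banach space $V$ is hermitian if and only if $e^{itS}$ is a surjective linear isometry on $V$ for every $t \in \mathbb{R}$ (this is \cite[Theorem 5.2.6]{FJB}). The Banach algebra structure on $\mathcal{B}_1, \mathcal{B}_2$ plays no essential role; only the Banach space structure and the isometry property of $U$ matter.

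First I would observe that since $U$ is bijective and bounded below, $U^{-1}$ exists and is itself a surjective isometry. Set $\widetilde T := UTU^{-1}$, which is a bounded linear operator on $\mathcal{B}_2$. The key algebraic computation is that for every $t\in \mathbb{R}$,
\[
e^{it\widetilde T} \;=\; \sum_{n=0}^{\infty} \frac{(it)^n (UTU^{-1})^n}{n!} \;=\; U\!\left(\sum_{n=0}^{\infty}\frac{(it)^n T^n}{n!}\right)\! U^{-1} \;=\; U\, e^{itT}\, U^{-1},
\]
where the interchange of $U, U^{-1}$ with the norm-convergent series is justified by the continuity of $U$ and $U^{-1}$.

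Next, since $T$ is hermitian on $\mathcal{B}_1$, the operator $e^{itT}$ is a surjective linear isometry on $\mathcal{B}_1$ for every $t \in \mathbb{R}$. Composition of surjective linear isometries is a surjective linear isometry, so $U e^{itT} U^{-1}$ is a surjective linear isometry on $\mathcal{B}_2$ for every $t\in \mathbb{R}$. By the aforementioned characterization, $\widetilde T = UTU^{-1}$ is a hermitian operator on $\mathcal{B}_2$.

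There is no real obstacle here; the only subtlety is making sure the series manipulation is valid, which follows immediately from the boundedness (hence continuity) of $U$ and $U^{-1}$ together with the absolute convergence of the exponential series in operator norm. The result is essentially a standard transport-of-structure statement and can be recorded in a few lines.
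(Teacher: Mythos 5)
Your proof is correct. The paper itself offers no argument for Proposition \ref{iti} --- it is simply stated as ``a well-known fact'' --- so there is nothing to compare against; your argument via the identity $e^{itUTU^{-1}}=Ue^{itT}U^{-1}$ together with the characterization of hermitian operators by the isometry of $e^{itT}$ (\cite[Theorem 5.2.6]{FJB}) is the standard way to fill this gap, and it is consistent with the same exponential criterion the paper already invokes in the proof of Theorem \ref{hermitian}. Your observation that only the Banach space structure matters (not the algebra structure) is also accurate.
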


In the rest of this section we consider  a surjective complex linear isometry $U:(\lxom, \|\cdot\|_{L}) \to (\lxtm, \|\cdot\|_{L})$ with $U(1)=1$.

\begin{lemma}\label{1}
For any $h \in \hmo$, there exists $h' \in \hmt$ such that
\[
U(1\otimes h)=1\otimes h'.
\]
\end{lemma}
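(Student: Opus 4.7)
The plan is to combine the characterization of hermitian operators on $\lxm$ (Proposition \ref{0}) with the fact that conjugation by a surjective isometry preserves hermiticity (Proposition \ref{iti}), and then evaluate both sides at the constant function $1$.

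First I would observe that for $h \in \hmo$, the multiplication operator $M_{1\otimes h}$ on $\lxom$ is a hermitian operator; this is immediate from Proposition \ref{0} applied with the zero $*$-derivation. Since $U:\lxom \to \lxtm$ is a surjective complex linear isometry, Proposition \ref{iti} tells us that the conjugated operator $U M_{1\otimes h} U^{-1}$ is a hermitian operator on $\lxtm$. Applying Proposition \ref{0} in the reverse direction, there exist $h' \in \hmt$ and a $*$-derivation $D'$ on $\mathcal A_2$ such that
\[
U M_{1\otimes h} U^{-1} = M_{1\otimes h'} + i\widehat{D'}.
\]

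The second step is to evaluate this identity at the constant function $1 \in \lxtm$. Since $U(1)=1$, we also have $U^{-1}(1)=1$, and therefore the left-hand side applied to $1$ yields
\[
U M_{1\otimes h} U^{-1}(1) = U\bigl(M_{1\otimes h}(1)\bigr) = U(1\otimes h).
\]
For the right-hand side, recall that every $*$-derivation on a unital $C^{*}$-algebra annihilates the unit (from $D'(1)=D'(1\cdot 1)=2D'(1)$, so $D'(1)=0$), hence $\widehat{D'}(1)(y) = D'(1) = 0$ for every $y \in X_2$. Consequently
\[
\bigl(M_{1\otimes h'} + i\widehat{D'}\bigr)(1) = 1\otimes h' + i\cdot 0 = 1\otimes h'.
\]

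Combining these two equalities gives $U(1\otimes h) = 1\otimes h'$ with $h' \in \hmt$, which is exactly the statement of the lemma. There is no genuine obstacle here; the only care needed is to verify that $D'(1)=0$ (so that the derivation part vanishes on constants), and to invoke $U^{-1}(1)=1$, both of which are straightforward from the hypotheses and the definitions already set up in the paper.
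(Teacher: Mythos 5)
Your proposal is correct and follows essentially the same route as the paper: apply Proposition \ref{0} and Proposition \ref{iti} to see that $UM_{1\otimes h}U^{-1}$ is hermitian on $\lxtm$, decompose it as $M_{1\otimes h'}+i\widehat{D'}$, and evaluate at the constant function $1$ using $U^{-1}(1)=1$ and $D'(1)=0$. Your explicit verification that a $*$-derivation annihilates the unit is a nice touch (the paper's remark on this point contains a typo, asserting $D(1)=1$), but the argument is the same.
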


\begin{proof}
Let $h \in \hmo$.  By Proposition \ref{0} and Proposition \ref{iti}, $UM_{1 \otimes h}U^{-1}$ is a hermitian operator on $\lxtm$.  Thus there exists $h' \in \hmt$ and a $*$-derivation $D$ on $\mathcal{A}_2$ such that
$UM_{1 \otimes h}U^{-1}=M_{1 \otimes h'}+i\widehat{D}$. For any $y \in X_2$, we have 
\[
(UM_{1 \otimes h}U^{-1})(1)(y)=(UM_{1 \otimes h}1)(y)=U(1 \otimes h)(y)
\]
and
\[
h'(1(y))+iD(1(y))=h'+i0=h'.
\]
This implies that 
 $U(1\otimes h)=1\otimes h'$. 
\end{proof}
By Lemma \ref{1}, we define a map $\psi_0: \hmo \to \hmt$ by
\[
U(1 \otimes h)=1 \otimes \psi_0(h).
\]

\begin{lemma}\label{2}
The map $\psi_0$ is a real linear isometry from $\hmo$ onto $\hmt$ 
such that $\psi_0(1)=1$.
\end{lemma}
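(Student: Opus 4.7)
The plan is to verify each of the four claimed properties (unitality, real linearity, isometry, surjectivity) by unpacking the definition of $\psi_0$ and using the corresponding properties of $U$, together with the elementary observation that for any $a\in\mathcal{A}_i$ one has $\|1\otimes a\|_L = \|1\otimes a\|_\infty + L(1\otimes a) = \|a\|_{\mathcal{A}_i} + 0 = \|a\|_{\mathcal{A}_i}$, and that the map $a\mapsto 1\otimes a$ from $\mathcal{A}_i$ to $\Lip(X_i,\mathcal{A}_i)$ is injective.

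For $\psi_0(1)=1$: since $U(1)=1$ and $1=1\otimes 1$ in $\Lip(X_2,\mathcal{A}_2)$, the defining relation $U(1\otimes 1)=1\otimes \psi_0(1)$ together with injectivity of $a\mapsto 1\otimes a$ gives $\psi_0(1)=1$. For real linearity, take $h_1,h_2\in H(\mathcal{A}_1)$ and $\alpha,\beta\in\mathbb{R}$. Since $H(\mathcal{A}_1)$ is a real subspace, $\alpha h_1+\beta h_2\in H(\mathcal{A}_1)$, so both $\psi_0(\alpha h_1+\beta h_2)$ and $\alpha\psi_0(h_1)+\beta\psi_0(h_2)$ are defined as elements of $H(\mathcal{A}_2)$. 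Applying the complex-linear (hence real-linear) operator $U$ to $1\otimes(\alpha h_1+\beta h_2)$ and using the definition of $\psi_0$ on each summand gives
\[
1\otimes \psi_0(\alpha h_1+\beta h_2) = U(1\otimes(\alpha h_1+\beta h_2)) = 1\otimes\bigl(\alpha\psi_0(h_1)+\beta\psi_0(h_2)\bigr),
\]
and injectivity yields the claim.

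For the isometry property, for any $h\in H(\mathcal{A}_1)$ compute
\[
\|\psi_0(h)\|_{\mathcal{A}_2} = \|1\otimes\psi_0(h)\|_L = \|U(1\otimes h)\|_L = \|1\otimes h\|_L = \|h\|_{\mathcal{A}_1},
\]
using that $U$ is an isometry. For surjectivity, observe that $U^{-1}:\Lip(X_2,\mathcal{A}_2)\to\Lip(X_1,\mathcal{A}_1)$ is again a surjective complex linear isometry with $U^{-1}(1)=1$, so Lemma \ref{1} applies to $U^{-1}$ and produces a map $\psi_0':H(\mathcal{A}_2)\to H(\mathcal{A}_1)$ with $U^{-1}(1\otimes h')=1\otimes \psi_0'(h')$ for every $h'\in H(\mathcal{A}_2)$. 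Applying $U$ to both sides shows $\psi_0(\psi_0'(h'))=h'$, so $\psi_0$ is surjective (and $\psi_0'=\psi_0^{-1}$).

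There is no real obstacle here; the entire lemma reduces to repeated use of the defining identity $U(1\otimes h)=1\otimes \psi_0(h)$ combined with the fact that the embedding $a\mapsto 1\otimes a$ is an isometric complex-linear injection of $\mathcal{A}_i$ into $\Lip(X_i,\mathcal{A}_i)$ with respect to $\|\cdot\|_L$. The only point to be slightly careful about is to invoke Lemma \ref{1} for $U^{-1}$ (rather than constructing an inverse by hand) to obtain surjectivity, since this is what guarantees that the preimage actually lies in $H(\mathcal{A}_1)$.
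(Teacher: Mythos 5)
Your proof is correct and follows essentially the same route as the paper: unitality and real linearity from the defining identity plus injectivity of $a\mapsto 1\otimes a$, the isometry from $\|1\otimes a\|_L=\|a\|$, and surjectivity by producing a hermitian preimage via $U^{-1}$. The only cosmetic difference is that you obtain surjectivity by invoking Lemma \ref{1} for $U^{-1}$ (a legitimate symmetry argument, since $U^{-1}$ is again a unital surjective complex linear isometry), whereas the paper re-runs that lemma's hermitian-operator computation explicitly for $U^{-1}M_{1\otimes h_2}U$.
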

\begin{proof}
For any $h_2 \in \hmt$, we have that $U^{-1}M_{1 \otimes h_2}U$ is a hermitian operator on $\lxom$. By Proposition \ref{0}  there are $h_1 \in \hmo$ and a $*$-derivation $D_1$ on $\ma_1$ such that
\[
U^{-1}M_{1 \otimes h_2}U=M_{1 \otimes h_1}+i\widehat{D_1}.
\]
Since  we have $M_{1\otimes h_1}=U^{-1}M_{1 \otimes h_2}U-i\widehat{D_1}$, we get 
\begin{multline*}
UM_{1\otimes h_1}U^{-1}(1)=
U(U^{-1}M_{1 \otimes h_2}U-i\widehat{D_1})U^{-1}(1)\\
=M_{1 \otimes h_2}(1)-U(i\widehat{D_1}(1))
=1 \otimes h_2-iU(0)
=1 \otimes h_2.
\end{multline*}
We obtain $U(1 \otimes h_1)=1 \otimes h_2$ and $\psi_0(h_1)=h_2$. It follows that  $\psi_0$ is surjective. For any $h \in \hmo$, we get 
$\|\psi_0(h)\|=\|1 \otimes \psi_0(h)\|_{L}=\|U(1 \otimes h)\|_{L}
=\|1 \otimes h \|_{L}=\|h\|$.
Thus, we have $\psi_0$ is an isometry. Since $U$ is a linear map, it is easy to see that $\psi_0$ is real linear.
Moreover $U(1)=1$, we get  $\psi_0(1)=1$.
\end{proof}

For any $a \in \mathcal{A}_1$, there are $h_1, h_2 \in \hmo$ such that $a=h_1+i h_2$. 
Thus, we define a map $\psi: \mathcal{A}_1 \to \mathcal{A}_2$ by
\[
\psi(a)=\psi(h_1+ih_2):=\psi_0(h_1)+i\psi_0(h_2).
\]
By a simple calculation, we have
\begin{equation}\label{psi}
U(1 \otimes a)=1 \otimes \psi(a)
\end{equation}
for any $a \in \mathcal{A}_1$.
\begin{lemma}\label{3}
The map $\psi$ is a surjective complex linear isometry from $\mathcal{A}_1$ onto $\mathcal{A}_2$
such that $\psi(1)=1$.
\end{lemma}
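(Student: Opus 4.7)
The plan is to verify in order: well-definedness of $\psi$, complex linearity, the identity $U(1\otimes a)=1\otimes \psi(a)$ for all $a\in \mathcal{A}_1$, surjectivity, preservation of norm, and $\psi(1)=1$. The proof will be essentially a bookkeeping exercise on top of the substantive content already contained in Lemma \ref{2}.

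First I would note that every $a\in \mathcal{A}_1$ admits a unique decomposition $a=h_1+ih_2$ with $h_1=(a+a^{*})/2$ and $h_2=(a-a^{*})/(2i)$ in $\hmo$, so $\psi(a)=\psi_0(h_1)+i\psi_0(h_2)$ is unambiguously defined. Real linearity of $\psi$ is then immediate from the real linearity of $\psi_0$ proved in Lemma \ref{2}, and the definition. For complex linearity it suffices to verify $\psi(ia)=i\psi(a)$: writing $a=h_1+ih_2$, we have $ia=-h_2+ih_1$ which is already the hermitian decomposition of $ia$, so
\[
\psi(ia)=\psi_0(-h_2)+i\psi_0(h_1)=-\psi_0(h_2)+i\psi_0(h_1)=i\bigl(\psi_0(h_1)+i\psi_0(h_2)\bigr)=i\psi(a).
\]

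Next I would establish identity \eqref{psi}, namely $U(1\otimes a)=1\otimes \psi(a)$ for every $a\in \mathcal{A}_1$. This is a direct consequence of the complex linearity of $U$ together with Lemma \ref{1}: with $a=h_1+ih_2$,
\[
U(1\otimes a)=U(1\otimes h_1)+iU(1\otimes h_2)=1\otimes \psi_0(h_1)+i(1\otimes \psi_0(h_2))=1\otimes \psi(a).
\]
From this the isometry property drops out at once, since
\[
\|\psi(a)\|_{\mathcal{A}_2}=\|1\otimes \psi(a)\|_{L}=\|U(1\otimes a)\|_{L}=\|1\otimes a\|_{L}=\|a\|_{\mathcal{A}_1}.
\]

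For surjectivity, given $b\in \mathcal{A}_2$, decompose $b=k_1+ik_2$ with $k_j\in \hmt$; by the surjectivity of $\psi_0$ from Lemma \ref{2}, pick $h_j\in \hmo$ with $\psi_0(h_j)=k_j$, and then $\psi(h_1+ih_2)=b$. Finally $\psi(1)=\psi_0(1)+i\psi_0(0)=1+0=1$, again by Lemma \ref{2}. There is no real obstacle in this lemma; the only point requiring any care is the well-definedness of $\psi$, which rests on the uniqueness of the hermitian/skew-hermitian decomposition in a unital $C^{*}$-algebra, and all remaining verifications reduce mechanically to the properties of $\psi_0$ already proved and to the complex linearity of $U$.
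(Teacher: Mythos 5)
Your proof is correct and follows essentially the same route as the paper: establish $U(1\otimes a)=1\otimes\psi(a)$ from Lemma \ref{1} and the complex linearity of $U$, read off linearity, the isometry property and $\psi(1)=1$ from that identity, and get surjectivity by decomposing an arbitrary element of $\mathcal{A}_2$ into hermitian parts and invoking the surjectivity of $\psi_0$ from Lemma \ref{2}. You merely spell out the "simple calculation" and the well-definedness point that the paper leaves implicit.
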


\begin{proof}
By (\ref{psi}), we have $\psi$ is a complex linear isometry with $\psi(1)=1$. Therefore it suffices to show  $\psi$ is  surjective.  For any $a \in \mathcal{A}_2$, there exists $h_1$, $h_2 \in \hmt$ such that $a=h_1+ih_2$. Since Lemma \ref{2} shows that $\psi_0: \hmo \to \hmt$ is  surjective, there are $h^{'}_1, h^{'}_2 \in \hmo$ such that $\psi_0(h^{'}_1)=h_1$ and $\psi_0(h^{'}_2)=h_2$. Then we get $a'=h^{'}_1+ih^{'}_2 \in \mathcal{A}_1$. This implies that 
\[
\psi(a')=\psi_0(h^{'}_1)+i\psi_0(h^{'}_2)=h_1+ih_2=a.
\]
This completes the proof.
\end{proof}

\begin{lemma}\label{4}
Suppose that $\ma_i$ is a unital factor $C^{*}$-algebra for $i=1,2$. Then 
there exists a surjective isometry $\varphi: X_2 \to X_1$ such that
\[
U(f \otimes 1)(y)=f(\varphi(y))\otimes 1
\]
for all $f \in \Lip(X_1)$ and $y \in X_2$.
\end{lemma}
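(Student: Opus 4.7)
The plan is to show that $U$ restricts to a unital surjective complex linear isometry between the scalar subalgebras $\Lip(X_i) \otimes 1$ and then invoke the classical description of isometries of scalar Lipschitz algebras. The factor hypothesis $Z(\ma_i) = \mathbb{C} \cdot 1$ will be the decisive tool that identifies the image $U(f \otimes 1)$ as taking values in $\mathbb{C}\cdot 1_{\ma_2}$. The key new input is to exploit \emph{both} left and right multiplication by $1 \otimes h$ as hermitian operators on $\lxom$: left multiplication $M_{1 \otimes h}$ is hermitian by Proposition \ref{0}, and right multiplication $R_{1 \otimes h}$ admits the decomposition $R_{1 \otimes h} = M_{1 \otimes h} + i\widehat{D}$ with $D(a) = i[h,a]$ an inner $*$-derivation on $\ma_1$, so Proposition \ref{0} also renders $R_{1 \otimes h}$ hermitian.

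By Proposition \ref{iti}, both $UM_{1 \otimes h}U^{-1}$ and $UR_{1 \otimes h}U^{-1}$ are hermitian on $\lxtm$; Proposition \ref{0} combined with evaluation at the constant $1$ and Lemma \ref{1} forces them to have the form $M_{1 \otimes \psi_0(h)} + i\widehat{D_h^{(M)}}$ and $M_{1 \otimes \psi_0(h)} + i\widehat{D_h^{(R)}}$ for some $*$-derivations $D_h^{(M)}, D_h^{(R)}$ on $\ma_2$ (the multiplication parts coincide since both operators send $1$ to $1 \otimes \psi_0(h)$). Subtracting yields
\[
U(M_{1\otimes h} - R_{1\otimes h})U^{-1} = i\widehat{(D_h^{(M)} - D_h^{(R)})},
\]
which I will apply to $f \otimes 1$. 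The left side vanishes because $(M_{1\otimes h} - R_{1\otimes h})(f \otimes 1)(x) = [h, f(x)\cdot 1] = 0$, since $f(x)\cdot 1 \in Z(\ma_1)$; hence $\widehat{(D_h^{(M)} - D_h^{(R)})}(G) = 0$ with $G := U(f \otimes 1)$. To identify the derivation difference explicitly, I will evaluate $UM_{1\otimes h}U^{-1}$ and $UR_{1\otimes h}U^{-1}$ at an arbitrary constant $1 \otimes b$, using Lemma \ref{3} so that $U^{-1}(1 \otimes b) = 1 \otimes \psi^{-1}(b)$; this yields
\[
D_h^{(M)}(b) - D_h^{(R)}(b) = -i\psi\bigl([h, \psi^{-1}(b)]\bigr).
\]
Substituting $b = G(y)$ and using the injectivity of $\psi$ delivers $[h, \psi^{-1}(G(y))] = 0$ for every $y \in X_2$ and every $h \in \hmo$. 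Since every element of $\ma_1$ is a complex linear combination of two hermitian elements, $\psi^{-1}(G(y))$ commutes with all of $\ma_1$, and the factor hypothesis then forces $\psi^{-1}(G(y)) \in Z(\ma_1) = \mathbb{C} \cdot 1$; equivalently $G(y) \in \mathbb{C} \cdot 1$ for every $y$.

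Writing $U(f \otimes 1)(y) = \widetilde{U}(f)(y) \cdot 1$ then defines a unital complex linear isometry $\widetilde{U}: \Lip(X_1) \to \Lip(X_2)$, and $U(f \otimes 1) \in \lxtm$ forces $\widetilde{U}(f)$ to be Lipschitz. Applying the same argument to $U^{-1}$, which is itself a unital surjective complex linear isometry of the same type with the roles of $\ma_1, \ma_2$ reversed, shows $\widetilde{U}$ is surjective. The classical characterization of unital surjective complex linear isometries of $(\Lip(X), \|\cdot\|_L)$ then furnishes a surjective isometry $\varphi: X_2 \to X_1$ with $\widetilde{U}(f) = f \circ \varphi$, whence $U(f \otimes 1)(y) = f(\varphi(y)) \otimes 1$. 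The main technical obstacles I anticipate are the verification that $R_{1 \otimes h}$ is hermitian (via the explicit identification of $a \mapsto i[h,a]$ as a $*$-derivation) and the explicit pointwise formula for $D_h^{(M)} - D_h^{(R)}$; once these algebraic points are in place, the factor condition and the classical scalar Lipschitz isometry result combine cleanly to finish the argument.
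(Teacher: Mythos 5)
Your argument is correct, but it extracts the crucial commutation relation by a genuinely different mechanism than the paper. The paper works in the opposite direction: for each skew-hermitian $b\in\ma_2$ it forms the inner $*$-derivation $D(a)=ba-ab$ on $\ma_2$, conjugates the hermitian operator $i\widehat{D}$ \emph{backward} by $U^{-1}$, observes (by evaluating at $1$) that the resulting hermitian operator on $\lxom$ has vanishing multiplication part and hence is $i\widehat{D'}$ for a $*$-derivation $D'$ on $\ma_1$, and then uses $D'(f(x)1)=0$ to conclude directly that $1\otimes b$ commutes with $U(f\otimes 1)$; the factor condition is then applied to $\ma_2$. You instead push \emph{forward} the pair $M_{1\otimes h}$, $R_{1\otimes h}$ for hermitian $h\in\hmo$ (correctly noting that $R_{1\otimes h}=M_{1\otimes h}+i\widehat{D}$ with $D(a)=i[h,a]$ a $*$-derivation, so it is hermitian by Proposition \ref{0}), identify the difference of the resulting $*$-derivations on $\ma_2$ by testing on constants via Lemma \ref{3}, and then invoke injectivity of $\psi$ and the factor condition on $\ma_1$ rather than $\ma_2$. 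Both routes are valid since both algebras are assumed to be factors, and both end the same way, via the scalar isometry $P_U$ (your $\widetilde U$) and the classical characterization of unital surjective isometries of $(\Lip(X),\|\cdot\|_L)$. The paper's version is slightly shorter because it needs only one conjugation and no explicit computation of the derivation on constants; yours has the mild advantage of never leaving the hermitian elements of $\ma_1$ as the generating data and of making the identity $i(D_h^{(M)}-D_h^{(R)})(b)=\psi([h,\psi^{-1}(b)])$ explicit, which transparently exhibits where the centrality of $\psi^{-1}(U(f\otimes 1)(y))$ comes from. One small point worth writing out in a final version: surjectivity of $\widetilde U$ does require the symmetric application of the argument to $U^{-1}$, as you indicate, since a priori $U^{-1}(g\otimes 1)$ need not lie in $\Lip(X_1)\otimes 1$.
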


\begin{proof}
For any $b \in \mathcal{A}_2$ with $b^{*}=-b$, we define a $*$-derivation $D$ on $\mathcal{A}_2$ by
\[
D(a)=ba-ab, \quad a \in \ma_1.
\]
Note that Proposition \ref{0} shows that the map $i\widehat{D}: \lxtm \to \lxtm$ defined by
\[
(i\widehat{D})(F)(y)=iD(F(y)) \quad F \in \lxtm, \quad y \in X_2,
\]
is a hermitian operator on $\lxtm$. Since the map $U$ is an isometry, $U^{-1}i\widehat{D}U$ is a hermitian operator on $\lxom$. By Proposition \ref{0} there exists  $h \in \hmo$ and $*$-derivation $D'$ on $\ma_1$ such that 
\[
U^{-1}i\widehat{D}U=M_{1 \otimes h}+i\widehat{D'}.
\]
As $U(1)=1$, we get
\[
(U^{-1}i\widehat{D}U)(1)=i(U^{-1}\widehat{D}U)(1)=iU^{-1}\widehat{D}(1)=iU^{-1}(0)=0.
\]
This implies that 
\begin{multline*}
0=(U^{-1}i\widehat{D}U)(1)=(M_{1 \otimes h}+i\widehat{D'})(1)\\
=1 \otimes h+i\widehat{D'}(1)=1\otimes h +i0=1\otimes h.
\end{multline*}
Thus we have $U^{-1}i\widehat{D}U=i\widehat{D'}$. This implies that for any $f \in \Lip(X_1)$, we have 
$(U^{-1}i\widehat{D}U)(f\otimes 1)(x)=i\widehat{D'}(f \otimes 1)(x)=0$. In addition by the definition of $D$, we get 
\begin{equation}\label{22}
\begin{split}
(U^{-1}i\widehat{D}U)(f\otimes 1)&=U^{-1}(i \widehat{D}U(f \otimes1))\\
&=iU^{-1}(1 \otimes b U(f \otimes 1)-U(f \otimes 1)1 \otimes b).
\end{split}
\end{equation}
Therefore we  have
\[
U^{-1}(1 \otimes bU(f \otimes 1)-U(f \otimes 1)1 \otimes b)=0.
\]
Since $U$ is surjective, we have
\begin{equation}\label{33}
1 \otimes bU(f \otimes 1)=U(f \otimes 1)1 \otimes b.
\end{equation}
Note we choose $b \in \mathcal{A}_2$ with $b^{*}=-b$ arbitrary. For each $a \in \mathcal{A}_2$ there exist unique elements $b_1, b_2 \in \ma_2$ such that $b_i^{*}=-b_i$ for $i=1,2$ and $a=-ib_1+b_2$. By applying (\ref{33}), we have 
\[
aU(f \otimes 1)(y)=U(f \otimes1)(y)a
\]
for any $a \in \ma_2$ and  $y \in X_2$. We get $U(f \otimes 1)(y) \in \mathcal{A} \cap \ma'=\mathbb{C}1$. Thus there is $g(y) \in \mathbb{C}$ such that
$U(f \otimes1)(y)=g(y)1$.
Since $U(f \otimes1) \in \lxtm$, we get $g \in \Lip(X_2)$ and
\[
U(f\otimes 1)=g\otimes 1.
\]
Thus we can define a map $P_U: \Lip(X_1) \to \Lip(X_2)$ by
\[
U(f\otimes 1)=P_U(f) \otimes 1, \quad f \in \Lip(X_1).
\]
It is easy to see that $P_U$ is a surjective complex linear isometry. Applying \cite[Corollary 15]{HO1}, there is a surjective isometry $\varphi: X_2 \to X_1$ such that
\[
U(f \otimes 1)(y)=P_U(f)(y)\otimes 1=f(\varphi(y)) \otimes 1, \quad f \in \Lip(X_1),\,\, y \in X_2.
 \]
\end{proof}

\begin{proof}[Proof of Theorem \ref{isometrydd}]
A simple calculation shows that the map $U$ from $\lxom$ onto $\lxtm$, which has the form of the theorem is a unital surjective linear isometry. We show the converse.
For any $h \in \hmo$, there exists $\psi_0(h) \in \hmt$ and $*$-derivation $D$ on $\ma_2$ such that 
\[
UM_{1 \otimes h}U^{-1}=M_{1 \otimes \psi_0(h)}+i \widehat{D}.
\]
Let $f \in \Lip(X_1)$. By Lemma \ref{4}, we have 
\begin{equation*}
\begin{split}
U(f \otimes h)(y)&=U(M_{1 \otimes h}(f \otimes 1))(y)=UM_{1 \otimes h}U^{-1}U(f \otimes 1)(y)\\
&=(M_{1 \otimes \psi_0(h)}+i \widehat{D})(U(f \otimes 1))(y)\\
&=\psi_0(h)(U(f \otimes 1)(y))=f(\varphi(y))\psi_0(h)
\end{split}
\end{equation*}
for any $y \in X_2$. For any $a \in \ma_1$, there exist $h_1,h_2 \in \hmo$ such that $a=h_1+ih_2$ and we get
\begin{equation*}
\begin{split}
U(f \otimes a)(x)&=U(f \otimes (h_1+ih_2))(x)
=U(f \otimes h_1)(x)+iU(f \otimes h_2)(x)\\
&=f(\varphi(x))\psi_0(h_1)+if(\varphi(x))\psi_0(h_2)\\
&=f(\varphi(x))\psi(a)
=\psi((f \otimes a)(\varphi(x)))=\psi(f(\varphi(y)) a)
\end{split}
\end{equation*}
for any $f \in \Lip(X_1)$ and $a \in \mathcal{A}_1$. Applying Proposition \ref{3.6} we obtain
 \[
UF(y)=\psi(F(\varphi(y))), \quad  F \in \lxom, y \in X_2.
\] 
\end{proof}

\end{section}

\begin{section}{Concluding comments and remarks}
Let us look at further problems related to Theorem \ref{isometrydd}. It is natural to investigate the following questions; What is the general form of unital surjective linear isometries between $\Lip(X,\ma)$, where $\ma$ is a unital $C^{*}$-algebra?  What is a complete description of surjective linear isometries on $\Lip(X,\ma)$ without the assumption that isometries preserve the identity?  In fact, less is known about surjective linear isometries on Banach spaces of all vector-valued Lipschitz maps with $\|\cdot\|_{L}$. The author suspects the reason  relies on a lack of a complete characterization of  the  extreme points of $\mathbb{B}((\lxe)^{*})$.  Thus we believe Theorem \ref{hermitian} is one of  crucial tools in investigating 
our questions. This might be an interesting direction for further research. These are left as research problems in the future.

\subsection*{Acknowledgments}
This work was supported by JSPS KAKENHI Grant Numbers JP21K13804. 

\end{section}


\begin{thebibliography}{99}

\bibitem{BD}
F.~F.~Bonsall and J.~Duncan,
\emph{Numerical Ranges of Operators on Normed Spaces and of Elements of Normed Algebras,} 
London Mathematical Society Lecture Note Series, vol. 2, Cambridge University Press, London–New York (1971).





\bibitem{BJJMm}
F.~Botelho, J.~Jamison, A.~Jim\'{e}nez-Vargas and M.~Villegas-Vallecillos,
\emph{Hermitian operators on Lipschitz function spaces,} Studia Math., {\bf 215} (2013), 127--137.



\bibitem{BJJMs}
F.~Botelho, J.~Jamison, A.~Jim\'{e}nez-Vargas and M.~Villegas-Vallecillos,
\emph{Hermitian operators on Banach algebras of Lipschitz functions,} Proc. Amer. Math, Soc. {\bf142} (2014), 3469--3481.


\bibitem{FJ}
R.~J.~Fleming and J.~E.~Jamison,
\emph{Hermitian Operators on $C(X,E)$ and the Banach-Stone Theorem,} Math. Z., {\bf 170} (1980), 77--84.

\bibitem{FJ89}
R.~J.~Fleming and J.~E.~Jamison,
\emph{Hermitian operators and isometries on sums of Banach spaces,} Proc. Edinburgh Math. Soc. {\bf  32}, (1989), 169--191.

\bibitem{FJB}
R.~J.~Fleming and J.~E.~Jamison,
\emph{Isometries on Banach spaces,} 
Chapman \& Hall/CRC Monographs and Surveys in Pure and Applied Mathematics, 129. Chapman \& Hall/CRC, Boca Raton, FL, 2003.

\bibitem{FJB08}
R.~J.~Fleming and J.~E.~Jamison,
\emph{Isometries on Banach spaces Vol. 2. Vector-Valued Function Spaces,} 
Chapman \& Hall/CRC Monogr. Surveys Pure Appl. Math. 138, Chapman \& Hall/CRC, Boca Raton, FL, 2008.



\bibitem{kho}
O.~Hatori, K.~Kawamura and S.~Oi, 
\emph{Hermitian operators and isometries on injective tensor products of uniform algebras and $C^{*}$-algebras,} J. Math. Anal. Appl. 
{\bf 472} (2019), 827--841.

\bibitem{HO1}
O.~Hatori and S.~Oi, 
\emph{Isometries on Banach algebras of vector-valued maps,} Acta Sci. Math. (Szeged),  {\bf 84} (2018), 151--183.



\bibitem{kadison}
R.~V.~Kadison,
\emph{Isometries of operator algebras,} Ann.of Math., {\bf 54} (1951), 325--338.

\bibitem{Lu}
G.~Lumer, 
\emph{Semi-inner product of bounded maps into Banach space,} Trans. Amer. Math. Soc. {\bf 100} (1961), 26--43.

\bibitem{Lu2}
G.~Lumer, 
\emph{On the isometries of reflexive Orlicz spaces,} Ann. Inst. Fourier, (1963), 99--109.

\bibitem{My}
S.~B.~ Myers,
\emph{
Banach spaces of continuous functions,}
Ann. of Math., {\bf 49} (1948), 132--140.


\bibitem{hermitian1}
S.~Oi, 
\emph{Hermitian operators and isometries on algebras of matrix-valued Lipschitz maps,} Linear and Multilinear Algebra,  {\bf 68} (2020), 1096--1112.




\bibitem{Si}
A.~M.~Sinclair, 
\emph{Homomorphisms and Derivations on Semisimple Banach Algebras,} Proc. Amer. Math, Soc. {\bf 24} (1970), 209--214.

  

\end{thebibliography}
\end{document}